\documentclass[11pt,reqno,a4paper]{amsart}
\usepackage{amsmath,amssymb}

\IfFileExists{hyperref.sty}{%
  \usepackage[unicode]{hyperref}
  \hypersetup{
    colorlinks=false,
    pdfborder={0 0 0},
    pdftitle={Determining Schrodinger Potentials on Riemannian Manifolds},
    pdfauthor={Lu Chen, Yan Jiang, Hongyu Liu, and Longyue Tao},
    pdfsubject={Finite-phase uniqueness and product manifolds},
    pdfkeywords={inverse boundary problem, Schrodinger potential, near-Euclidean metric, product manifold}
  }
}{}
\providecommand{\href}[2]{#2}
\providecommand{\nolinkurl}[1]{\texttt{#1}}

\let\opSevenBibliography\thebibliography
\renewcommand{\thebibliography}[1]{\opSevenBibliography{#1}\raggedright}

\providecommand{\opSevenScript}[1]{\mathcal{#1}}

\numberwithin{equation}{section}
\newtheorem{theorem}{Theorem}[section]
\newtheorem{lemma}{Lemma}[section]
\theoremstyle{remark}
\newtheorem*{remark}{Remark}
\newcommand{\opSevenR}{\mathbb R}
\newcommand{\opSevenC}{\mathbb C}
\newcommand{\opSevenZ}{\mathbb Z}

\newcommand{\opSeveneps}{\varepsilon}
\newcommand{\opSevenOm}{\Omega}
\newcommand{\opSevenId}{\operatorname{Id}}
\newcommand{\opSevenSpecD}{\operatorname{Spec}_{D}}
\newcommand{\opSevensupp}{\operatorname{supp}}
\newcommand{\opSevenran}{\operatorname{ran}}
\newcommand{\opSevenrank}{\operatorname{rank}}
\newcommand{\opSevenRes}{\operatorname{Res}}
\newcommand{\opSevenSym}{\operatorname{Sym}}
\newcommand{\opSevenVol}{\operatorname{Vol}}
\newcommand{\opSeventw}{\mathrm{tw}}
\newcommand{\opSevenext}{\mathrm{ext}}

\newcommand{\opSevenang}[1]{\langle #1\rangle}
\newcommand{\opSevennorm}[1]{\lVert #1\rVert}
\newcommand{\opSevenabs}[1]{\lvert #1\rvert}
\newcommand{\opSevencA}{\mathcal A}
\newcommand{\opSevencE}{\mathcal E}
\newcommand{\opSevencR}{\mathcal R}
\newcommand{\opSevencL}{\mathcal L}
\newcommand{\opSevendoi}[1]{\href{https://doi.org/#1}{\nolinkurl{doi:#1}}}
\allowdisplaybreaks[2]
\title[Determining Schr\"odinger potentials]{Determining Schr\"odinger Potentials on Riemannian Manifolds}
\author{Lu Chen}
\address[Lu Chen]{Key Laboratory of Algebraic Lie Theory and Analysis, Ministry of Education, School of Mathematics and Statistics, Beijing Institute of Technology, Beijing 100081, PR China}
\email{chenlu5818804@163.com}
\author{Yan Jiang}
\address[Yan Jiang]{Department of Mathematics, City University of Hong Kong, Hong Kong SAR, China}
\email{yjian24@cityu.edu.hk}
\author{Hongyu Liu}
\address[Hongyu Liu]{Department of Mathematics, City University of Hong Kong, Hong Kong SAR, China}
\email{hongyu.liuip@gmail.com, hongyliu@cityu.edu.hk}
\author{Longyue Tao}
\address[Longyue Tao]{Department of Mathematics, City University of Hong Kong, Hong Kong SAR, China}
\email{sdyctly@163.com, longyue.tao@my.cityu.edu.hk}
\date{September 16, 2026}
\subjclass[2020]{Primary 35R30; Secondary 58J32, 35J25, 34A55}
\keywords{Schr\"odinger potential, anisotropic Calder\'on problem, complex geometric optics solutions, product manifold, boundary spectral data}

\begin{document}
\begin{abstract}
We consider the inverse problem of determining a potential for the stationary Schr\"odinger equation on certain Riemannian manifolds with boundary from the Dirichlet-to-Neumann map. We derive two unique identifiability results. The first assumes that the Riemannian metric is close to the Euclidean metric, with the closeness quantified by a relative Fourier condition imposed on the potential function. The second assumes that the manifold is of product type and that the potential function is additively separable. Both the results and the technical arguments are new, contributing to the advancement of this challenging open inverse problem.
\end{abstract}
\maketitle

\section{Introduction}\label{sec:introduction}

Let $(\opSevenOm,g)$ be a compact connected Riemannian manifold of dimension $n\geq2$ with nonempty smooth boundary, where $g$ is known. We write
\[
 L_{g,q}=-\Delta_g+q,
 \qquad
 \Delta_g u=\mu^{-1}\partial_i(\mu g^{ij}\partial_j u),
 \qquad
 \mu=(\det g)^{1/2},\quad dV_g=\mu\,dx
\]
in local coordinates. Here $(g^{ij})=(g_{ij})^{-1}$, and repeated indices are summed from $1$ to $n$. For a real bounded potential $q$, assume that $0\notin\opSevenSpecD L_{g,q}$, where $\opSevenSpecD$ denotes the spectrum of the Dirichlet realization. Then
\[
 L_{g,q}u=0\quad\text{in }\opSevenOm,\qquad u|_{\partial\opSevenOm}=f
\]
has a unique weak solution $u\in H^1(\opSevenOm)$ for every $f\in H^{1/2}(\partial\opSevenOm)$. The associated Dirichlet-to-Neumann map is
\[
 \Lambda_{g,q}:H^{1/2}(\partial\opSevenOm)\longrightarrow H^{-1/2}(\partial\opSevenOm),
 \qquad \Lambda_{g,q}f=\partial_{\nu_g}u|_{\partial\opSevenOm},
\]
where $\nu_g$ is the outward unit normal and the normal derivative is understood weakly. The boundary duality uses the induced measure $dS_g$. We seek assumptions under which, for the same known metric,
\begin{equation}\label{eq:uniqueness-question}
 \Lambda_{g,q_1}=\Lambda_{g,q_2}
 \quad\Longleftrightarrow\quad q_1=q_2\ \text{in }\opSevenOm.
\end{equation}

Potential recovery on a prescribed Riemannian background is closely connected with Calder\'on's inverse conductivity problem, in particular through the reduction of the problem in a fixed conformal class to a Schr\"odinger equation; see \cite{Calderon,FSU}. In Euclidean domains of dimension at least three, the complex geometric optics method of Sylvester and Uhlmann \cite{SU} established a fundamental global uniqueness theorem. The problem for smooth non-Euclidean Riemannian metrics is widely open. Important uniqueness results were obtained on admissible manifolds and, more generally, on conformally transversally anisotropic manifolds under suitable assumptions on the transversal geodesic ray transform \cite{DKSU,DKLS}; a constructive treatment is given in \cite{FKOU}. More recently, Uhlmann and Wang \cite{UW} stated a uniqueness theorem for smooth compactly supported potentials in dimension three under a $C^4$ near-Euclidean assumption and strict boundary convexity.

In this paper, we establish two uniqueness results for \eqref{eq:uniqueness-question}. The first is a quantitative perturbative result on Euclidean coordinate domains in every dimension $n\geq3$, with the admissible metric size related explicitly to a Fourier condition on the unknown difference. The second concerns additively separable potentials on general Riemannian products. We first state both results, describe their technical contributions, and clarify the role of their assumptions.

For the first result, let $\opSevenOm\subset\opSevenR^n$, $n\geq3$, be a bounded connected domain with smooth boundary; its closure is the underlying compact manifold. Set
\begin{equation}\label{eq:s-index}
 s=s_n=\lfloor n/2\rfloor+1>n/2,
 \qquad e=(\delta_{ij}).
\end{equation}
All coefficient norms and Fourier transforms in this setting are taken in the given Euclidean coordinates. For $\eta>0$, define
\begin{equation}\label{eq:admissible-metric}
 \opSevencA_{\opSevenOm}(\eta)
 =\bigl\{g\in C^s(\overline\opSevenOm;\opSevenSym^2\opSevenR^n):
       g>0\text{ on }\overline\opSevenOm,\
       \opSevennorm{g-e}_{C^s(\overline\opSevenOm)}\leq\eta\bigr\}.
\end{equation}
For $r=q_1-q_2$, let $r_0$ denote its zero extension to $\opSevenR^n$. We use the unitary Fourier transform
\begin{equation}\label{eq:fourier-convention}
 \widehat f(\xi)=(2\pi)^{-n/2}\int_{\opSevenR^n}e^{-ix\cdot\xi}f(x)\,dx,
 \qquad B_R=\{\xi\in\opSevenR^n:\opSevenabs\xi<R\},
\end{equation}
and impose the relative Fourier condition
\begin{equation}\label{eq:relative-fourier}
 \opSevennorm{\widehat{r_0}}_{L^2(B_R)}\geq\kappa\opSevennorm{r_0}_{L^2(\opSevenR^n)},
 \qquad R\geq1,\quad 0<\kappa<1.
\end{equation}

\begin{theorem}\label{thm:near-euclidean}
There exists $c_{\opSevenOm,n}>0$, depending only on $\opSevenOm$ and $n$, with the following property. Let $q_1,q_2\in H^s(\opSevenOm;\opSevenR)$, with $s$ as in \eqref{eq:s-index}, and set
\[
 M=1+\max_{j=1,2}\opSevennorm{q_j}_{H^s(\opSevenOm)}.
\]
Assume \eqref{eq:relative-fourier} and
\begin{equation}\label{eq:metric-smallness}
 g\in\opSevencA_{\opSevenOm}\!\left(c_{\opSevenOm,n}
          \min\left\{\frac\kappa R,\frac{\kappa^2}{M}\right\}\right).
\end{equation}
If $0\notin\opSevenSpecD L_{g,q_j}$ for $j=1,2$, then
\[
 \Lambda_{g,q_1}=\Lambda_{g,q_2}
 \quad\Longrightarrow\quad q_1=q_2\quad\text{almost everywhere in }\opSevenOm.
\]
\end{theorem}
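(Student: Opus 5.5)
We argue by perturbation from the Euclidean Sylvester--Uhlmann construction, using the Alessandrini identity for the fixed metric $g$. If $\Lambda_{g,q_1}=\Lambda_{g,q_2}$, then for all $H^1$ solutions $L_{g,q_1}u_1=0$ and $L_{g,q_2}u_2=0$ we have
\[
\int_\opSevenOm r\,u_1u_2\,dV_g=0 .
\]
Writing $dV_g=\mu\,dx$, this reads $\int_\opSevenOm r\mu\,u_1u_2\,dx=0$.

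\textbf{Step 1: conjugation to a Euclidean operator.} Conjugating by $\mu^{1/2}$ (or simply multiplying the equation by $\mu$), we write
\[
\mu L_{g,q}=-\Delta+q+P,\qquad P=-\partial_i\bigl((\mu g^{ij}-\delta^{ij})\partial_j\bigr)+(\mu-1)q .
\]
Here $P$ is a second-order divergence-form operator whose coefficients are $O(\eta)$ in $C^s$.

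\textbf{Step 2: CGO solutions.} We take $\zeta_1,\zeta_2\in\opSevenC^n$ with $\zeta_j\cdot\zeta_j=0$, $\zeta_1+\zeta_2=-i\xi$, and $|\zeta_j|\sim\tau$ large. This is possible since $n\ge3$. We seek $u_j=e^{x\cdot\zeta_j}(1+w_j)$ with $w_j$ solving
\[
(-\Delta-2\zeta_j\cdot\nabla)w_j=-e^{-x\cdot\zeta_j}(q_j+P)\bigl(e^{x\cdot\zeta_j}(1+w_j)\bigr)
\]
on a ball containing $\overline\opSevenOm$. We use the Sylvester--Uhlmann inverse $G_{\zeta}$, with $\opSevennorm{G_\zeta}_{L^2_\delta\to H^k_{\delta-1}}\lesssim\tau^{k-1}$ for $k=0,1,2$. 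The conjugated perturbation $e^{-x\cdot\zeta}Pe^{x\cdot\zeta}$ has size $\lesssim\eta(\tau^2+\tau\nabla+\nabla^2)$, so the map $G_\zeta\circ e^{-x\cdot\zeta}Pe^{x\cdot\zeta}$ has norm $\lesssim\eta$ from $H^1_\tau$ to $H^1_\tau$. This norm is uniform in $\tau$, which is why $\eta$ must be small but need not shrink with $\tau$. The potential part contributes $\lesssim M/\tau$. A Neumann series therefore gives $w_j$ with
\[
\opSevennorm{w_j}_{L^2}\lesssim \eta+M/\tau
\]
whenever $c_{\opSevenOm,n}$ is small and $\tau\gtrsim M$. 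The $C^s$ and $H^s$ regularity, with $s>n/2$, enters here through the algebra property and multiplier bounds. These also control the extension of $g-e$ to a compactly supported perturbation with comparable norm.

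\textbf{Step 3: inserting the CGOs.} Plugging the solutions into the identity gives, for $|\xi|<R$,
\[
|\widehat{(r\mu)_0}(\xi)|\lesssim\opSevennorm{r}_{L^2}\bigl(\eta+M/\tau\bigr)+\text{terms from }\mu-1 .
\]
We cannot let $\tau\to\infty$ and kill the error, since the $\eta$ term does not decay. This is exactly why a relative Fourier condition is assumed rather than a full Sylvester--Uhlmann limit. We also have
\[
\opSevennorm{\widehat{(r\mu)_0}-\widehat{r_0}}_{L^2}\le\opSevennorm{\mu-1}_\infty\opSevennorm{r}_{L^2}\lesssim\eta\opSevennorm{r}_{L^2}.
\]
Squaring and integrating over $B_R$, and choosing $\tau$ as large as needed, we get
\[
\kappa\opSevennorm{r_0}_{L^2}\le\opSevennorm{\widehat{r_0}}_{L^2(B_R)}\lesssim\bigl(\eta R^{n/2}+\dots\bigr)\opSevennorm{r}_{L^2}.
\]

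\textbf{Main obstacle.} The hard part is getting bounds scaling like $\kappa/R$ and $\kappa^2/M$ rather than $R^{-n/2}$. The $\kappa^2/M$ threshold suggests the intended argument is bilinear. One pairs the CGO error against $r$ itself, not pointwise in $\xi$, and uses Plancherel in $\xi$: average over $\xi\in B_R$ with $\zeta_j$ depending smoothly on $\xi$, so that the error integrated over $B_R$ is bounded by $\eta$ times an $L^2$ norm of $r$, without a volume factor. The factor $R$ comes from the $|\xi|$ dependence of the conjugated operator, since $|\zeta_j|\ge|\xi|/2$ forces $\tau\gtrsim R$. The $\kappa^2/M$ factor comes from a Cauchy--Schwarz step balancing $\opSevennorm{r}_{H^s}\le 2M$ against $\opSevennorm{r}_{L^2}$. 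I would first try to prove the averaged estimate
\[
\int_{B_R}\Bigl|\int r\mu\,(w_1+w_2+w_1w_2)e^{-ix\cdot\xi}\,dx\Bigr|^2d\xi\lesssim\bigl(\eta R+\eta M\kappa^{-1}\bigr)^2\opSevennorm{r}_{L^2}^2 .
\]
The smallness \eqref{eq:metric-smallness} then forces $\kappa\opSevennorm{r_0}\le\tfrac12\kappa\opSevennorm{r_0}$, hence $r=0$.
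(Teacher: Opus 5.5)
\textbf{There is a genuine gap, and it is in the step you treat as routine.} In Step~2 you assert that $G_\zeta\circ e^{-x\cdot\zeta}Pe^{x\cdot\zeta}$ has norm $O(\eta)$ on $H^1_\tau$, uniformly in $\tau$. This is false for anisotropic perturbations.

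\begin{itemize}
\item The conjugated principal part $-D_\zeta\cdot(ED_\zeta w)$, with $D_\zeta=\nabla+\zeta$, contains the term $-E^{ij}\zeta_i\zeta_jw$. This term is generically of size $\eta\tau^2$; it cancels only in special cases, such as $E$ a multiple of the identity.
\item Near the characteristic variety of $P_\zeta=-\Delta-2\zeta\cdot\nabla$, where $G_\zeta$ is large, $i\xi+\zeta$ is a null vector. There the symbol $E(i\xi+\zeta)\cdot(i\xi+\zeta)$ is still of order $\eta\tau^2$.
\item The inverse gains only one power of $\tau$; for instance $\|G_\zeta f\|_{H^1_\tau}\lesssim\|f\|_{L^2}$. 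Hence you get $\|G_\zeta(\eta\tau^2w)\|_{H^1_\tau}\lesssim\eta\tau\|w\|_{H^1_\tau}$, not $\eta\|w\|_{H^1_\tau}$.
\end{itemize}

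The correct bound, which is what the paper proves in Lemma~\ref{lem:perturbation}, is $C_K(\eta\tau+M/\tau)$. A uniform $O(\eta)$ bound would prove far too much. Combined with a volume-free Fourier estimate, it would let you send $\tau\to\infty$ and get $\|r_0\|\le C\eta\|r_0\|$. That is uniqueness for every metric in a fixed $C^s$ neighborhood of $e$, with no Fourier condition at all. This should have been a warning sign.

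Consequently ``choosing $\tau$ as large as needed'' fails. The missing idea is to fix one finite phase size, $\tau=\max\{R,A_0M/\kappa\}$, which must satisfy simultaneously $\tau\ge R$, $M/\tau\lesssim\kappa$ and $\eta\tau\lesssim\kappa$. Such a $\tau$ exists exactly when $\eta\lesssim\min\{\kappa/R,\kappa^2/M\}$, and this is where both thresholds come from:
\begin{itemize}
\item $\tau\ge R$ is needed so that paired phases with $\zeta_1+\zeta_2=-i\xi$, $\zeta_j\cdot\zeta_j=0$ and $|\operatorname{Re}\zeta_j|=\tau$ exist for all $\xi\in B_R$.
\item $\kappa^2/M$ is the balance between $\eta\tau$ and $M/\tau$. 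It is not a Cauchy--Schwarz trade between $\|r\|_{H^s}$ and $\|r\|_{L^2}$.
\end{itemize}

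\textbf{The second gap is the volume-free averaged estimate, which you only state as a goal.} With the $L^2$ bounds on $w_j$ from your Step~2, the factor $R^{n/2}$ is unavoidable. Smooth dependence of the corrections on $\xi$ is neither established nor needed.

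The paper proceeds as follows.
\begin{itemize}
\item It absorbs the density into the amplitude, $\mu u_1u_2=e^{-ix\cdot\xi}(1+a_\xi)$.
\item It proves $\sup_{\xi\in B_\tau}\|a_\xi\|_{H^s(\mathbb R^n)}\le C_2(\eta\tau+M/\tau)$ with $s>n/2$. This requires building the corrections in $H^s$-based spaces (the $Y_\tau$ spaces on a twisted cube), together with the $H^s$ algebra property for the product term $w_1w_2$. The weighted $L^2/H^1$ spaces of Sylvester--Uhlmann do not give this.
\item It writes $\widehat{a_\xi r_0}(\xi)=(2\pi)^{-n/2}\int\widehat{a_\xi}(\xi-\vartheta)\widehat{r_0}(\vartheta)\,d\vartheta$. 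Cauchy--Schwarz with weights $\langle\xi-\vartheta\rangle^{\pm s}$ and Tonelli then give $\|T_ar_0\|_{L^2(B)}\le C_{n,s}\sup_\xi\|a_\xi\|_{H^s}\|r_0\|_{L^2}$ for every measurable $B$, because $\int\langle z\rangle^{-2s}\,dz<\infty$ (Lemma~\ref{lem:fourier-error}).
\end{itemize}
With the finite $\tau$ above, this yields $\kappa\|r_0\|\le\|\widehat{r_0}\|_{L^2(B_\tau)}\le\tfrac{\kappa}{2}\|r_0\|$, hence $r_0=0$.

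Your Plancherel bound for the $\mu-1$ term is correct. It does not, however, replace this lemma for the correction terms.
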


The new feature of Theorem~\ref{thm:near-euclidean} is the explicit relation between a finite-frequency condition and the permitted perturbation of the principal part. The proof treats the complete divergence-form perturbation with an operator bound of order $\opSeveneps\tau+M/\tau$, where $\opSeveneps=\opSevennorm{g-e}_{C^s}$ and $\tau$ is the size of the complex phase. A Fourier error estimate depending only on spatial $H^s$ norms, and not on the volume of the frequency ball or on derivatives in the frequency parameter, makes it possible to close the argument at one finite $\tau$. No strict convexity of $\partial\opSevenOm$ or vanishing of the potentials near the boundary is assumed. The admissible metric size depends quantitatively on the potential bound and on the relative Fourier parameters.

\begin{remark}[The regularity assumptions]
The index $s$ in \eqref{eq:s-index} is the smallest integer strictly larger than $n/2$. This condition ensures both that $H^s(\opSevenR^n)$ is an algebra and that
\[
 \int_{\opSevenR^n}(1+|z|^2)^{-s}\,dz<\infty.
\]
These two properties control the products of the solution corrections and the Fourier error, respectively. The $C^s$ regularity of the metric supplies the coefficient multiplier bounds for the full principal perturbation. The same argument applies to any fixed integer $s>n/2$, with constants depending on $\opSevenOm,n,s$; the choice \eqref{eq:s-index} makes $s$ depend only on $n$.
\end{remark}

\begin{remark}[The relative Fourier assumption]
For every nonzero $r_0\in L^2(\opSevenR^n)$ and every fixed $\kappa\in(0,1)$, Plancherel's theorem gives \eqref{eq:relative-fourier} for a sufficiently large finite $R$. This observation does not give a uniform uniqueness theorem for all potentials at a fixed non-Euclidean metric, since the permissible size in \eqref{eq:metric-smallness} decreases when $R$ increases. There are, nevertheless, useful classes with uniform parameters. For example, if the differences belong to a fixed finite-dimensional subspace $W\subset H^s(\opSevenOm)$, the zero extensions of its $L^2$ unit sphere form a compact subset of $L^2(\opSevenR^n)$. Fourier truncations converge to the identity uniformly on this compact set, which gives one $R$ for all such differences and any prescribed $\kappa<1$. This use of a finite-dimensional prior is related in motivation to \cite{AS}, although our boundary data and estimates are different. For $g=e$, the metric restriction is automatic; alternatively, the estimates below can be used with $\tau\to\infty$ to recover the classical Euclidean uniqueness conclusion in the regularity class considered here.
\end{remark}

For the second result, let $(X,g_X)$ be a smooth compact connected Riemannian manifold of positive dimension with nonempty smooth boundary. Let $(Y,h)$ be a smooth compact connected manifold without boundary, with $d=\dim Y\geq1$. Define
\begin{equation}\label{eq:product-setting}
 \opSevenOm=X\times Y,\qquad
 g=g_X\oplus h,\qquad
 \partial\opSevenOm=\partial X\times Y.
\end{equation}
For $z\notin\opSevenSpecD(-\Delta_g+q)$, write $\Lambda_{\opSevenOm,q}(z)$ for the Dirichlet-to-Neumann map of
\[
 (-\Delta_g+q-z)u=0\quad\text{in }\opSevenOm,
 \qquad u|_{\partial\opSevenOm}=f.
\]
Here $\Lambda_{\opSevenOm,q}(0)=\Lambda_{g,q}$, and only this zero-energy operator is measured. This result also includes the case $\dim\opSevenOm=2$.

\begin{theorem}\label{thm:product}
Let $(\opSevenOm,g)$ satisfy \eqref{eq:product-setting}, and let
\begin{equation}\label{eq:additive-potentials}
 \begin{gathered}
 q_j(x,y)=a_j(x)+b_j(y),\qquad
 a_j\in C^\infty(X;\opSevenR),\quad b_j\in C^\infty(Y;\opSevenR),\\
 \int_Y b_j\,dV_h=0,\qquad j=1,2.
 \end{gathered}
\end{equation}
If $0\notin\opSevenSpecD(-\Delta_g+q_j)$ for $j=1,2$, then
\[
 \Lambda_{\opSevenOm,q_1}(0)=\Lambda_{\opSevenOm,q_2}(0)
 \quad\Longrightarrow\quad
 a_1=a_2\text{ on }X,\qquad b_1=b_2\text{ on }Y.
\]
The boundary operators are compared on the full boundary as maps
$H^{1/2}(\partial X\times Y)\to H^{-1/2}(\partial X\times Y)$.
\end{theorem}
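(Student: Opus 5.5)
The plan is to separate variables along the eigenfunction expansion of $-\Delta_h+b_j$ on the closed manifold $Y$. For each $j$, the operator $-\Delta_h+b_j$ is self-adjoint on $L^2(Y,dV_h)$ with discrete spectrum $\lambda_{j,1}\le\lambda_{j,2}\le\cdots\to\infty$ and an orthonormal eigenbasis $\{\phi_{j,k}\}$. Since $-\Delta_g+q_j=(-\Delta_{g_X}+a_j)\otimes 1+1\otimes(-\Delta_h+b_j)$, for boundary data $f=\psi(x)\phi_{j,k}(y)$ the solution is $u=v(x)\phi_{j,k}(y)$, where $v$ solves $(-\Delta_{g_X}+a_j+\lambda_{j,k})v=0$ on $X$ with $v|_{\partial X}=\psi$. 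Hence
\[
\Lambda_{\opSevenOm,q_j}(0)=\bigoplus_k \Lambda_{X,a_j}(-\lambda_{j,k})\otimes P_{j,k},
\]
where $P_{j,k}$ is the projection onto $\phi_{j,k}$ and $\Lambda_{X,a}(z)$ is the Dirichlet-to-Neumann map on $X$ for $-\Delta_{g_X}+a-z$. Whenever $-\lambda_{j,k}$ is a Dirichlet eigenvalue of $-\Delta_{g_X}+a_j$, this is excluded by $0\notin\opSevenSpecD$.

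First, I recover the spectral data. The operators $\Lambda_{X,a_j}(z)$ are meromorphic in $z$, and for distinct $z$ they differ by a smoothing nonzero operator. I would justify this by the resolvent identity and unique continuation, which show that $\Lambda(z)-\Lambda(z')$ is injective on an appropriate subspace. Equality of the full maps on $\partial X\times Y$ means that, for every $\psi\in H^{1/2}(\partial X)$ and every $\phi\in L^2(Y)$, the two operators agree. Expanding $\phi$ in both eigenbases, I aim to show that the spectra $\{\lambda_{1,k}\}$ and $\{\lambda_{2,k}\}$ coincide with multiplicities, and that the corresponding eigenspaces coincide.

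The key observation is that $\Lambda_{\opSevenOm,q_j}(0)$ commutes with $1\otimes(-\Delta_h+b_j)$. Moreover, on distinct eigenspaces it acts through distinct operators $\Lambda_{X,a_j}(-\lambda)$. Distinctness follows from the high-frequency asymptotics of the full symbol: the symbol of $\Lambda_{X,a}(z)$ has subleading terms in which $z$ appears at order $-1$ with coefficient $-\tfrac12|\xi'|^{-1}$ times $(z-a)$. Thus the eigenspace decomposition of $L^2(Y)$ is determined as the joint "spectral" decomposition of the single operator $\Lambda$. Concretely, I restrict to $\psi$ fixed and take the pairing with $\psi$; this gives a self-adjoint operator on $L^2(Y)$ whose eigenspaces are the $\phi_{j,k}$-spans, with eigenvalues $\langle\Lambda_{X,a_j}(-\lambda_{j,k})\psi,\psi\rangle$.

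Second, I obtain the two conclusions. From the symbol asymptotics, the values $z$ and $a|_{\partial X}$ can be read off in the combination $a|_{\partial X}+\lambda$. Comparing on each common eigenspace $E$ gives $\Lambda_{X,a_1}(-\lambda_1(E))=\Lambda_{X,a_2}(-\lambda_2(E))$, and this identity holds for infinitely many $E$.

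I then argue $\lambda_1(E)-\lambda_2(E)=c$ is constant. This follows from the one-dimensional-in-$z$ boundary spectral problem on $X$: equality of Dirichlet-to-Neumann maps at infinitely many energies that tend to $-\infty$ determines the boundary spectral data of $-\Delta_{g_X}+a_j$ (a Gel'fand-type / boundary-control argument). That in turn gives $a_1=a_2+c$ on $X$ by the Belishev–Kurylev boundary control method.

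Since $Y$ has the common eigenspaces, the operators $-\Delta_h+b_1$ and $-\Delta_h+b_2+c$ have identical spectral resolutions, so $b_1=b_2+c$ pointwise. The normalisation $\int_Y b_j\,dV_h=0$ then forces $c=0$, which yields both $a_1=a_2$ and $b_1=b_2$.

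The main obstacle is the step of passing from one zero-energy measurement to infinitely many energies on $X$ and then invoking boundary control. I would first try to obtain directly that the common eigenspace decomposition forces $\Lambda_{X,a_1}(z)=\Lambda_{X,a_2}(z+c)$ at the accumulating points $z=-\lambda_k\to-\infty$. Then I would use meromorphy together with decay in $z$ to extend this equality to all $z$, from which boundary spectral data and hence $a_1-a_2=c$ follow.
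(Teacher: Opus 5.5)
\paragraph{Main gap: extending equality from the sampled energies.} This is the step you postpone to the end, and as proposed it does not go through. Once the transverse potentials are matched, you know $\Lambda_{X,a_1}(-\nu_k)=\Lambda_{X,a_2}(-\nu_k)$ only at the distinct eigenvalues $\nu_k$ of $-\Delta_h+b$.

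\begin{itemize}
\item The points $-\nu_k$ tend to $-\infty$ and have no finite accumulation point. So meromorphy alone gives nothing: any discrete set is the zero set of some entire function.
\item The statement ``equality at infinitely many energies tending to $-\infty$ determines the boundary spectral data'' is not an available theorem.
\item Decay in $z$ is not the relevant property. What is needed is a growth bound on a whole complex region, matched with a density condition on the samples.
\end{itemize}

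The paper does this as follows. It sets $z=-w^2$ and proves that $w\mapsto\Lambda_{X,a}(-w^2)$ is holomorphic with norm at most $C(1+|w|)^3$ on a half-plane $\operatorname{Re}w>\sigma$. This is a resolvent bound off the real axis, not just along it. Hence $\bigl([\Lambda_{X,a_1}(-w^2)-\Lambda_{X,a_2}(-w^2)]f,h\bigr)_{\partial X}/(w+\alpha)^4$ is bounded and holomorphic there, with zeros at $w_k=\sqrt{\nu_k}$. After mapping to the disk, the Blaschke condition forces it to vanish, because $\sum_k\nu_k^{-1/2}=\infty$.

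This divergence is the heart of the matter. For sparser samples with $\sum_k\nu_k^{-1/2}<\infty$, there are nonzero bounded holomorphic functions (Blaschke products) with exactly these zeros. It is also a nontrivial statement about \emph{distinct} eigenvalues, since multiplicities can grow. On round spheres, for example, the distinct eigenvalues $k(k+d-1)$ grow exactly like $k^2$, which is the borderline rate.

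The paper gets the divergence from the Weyl law with remainder $O(t^{(d-1)/2})$. This bounds every multiplicity by $C(1+t)^{(d-1)/2}$, so there are at least $c\sqrt t$ distinct eigenvalues below $t$, and $\nu_k\leq Ck^2$. Your proposal has neither the half-plane estimate nor this count.

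\paragraph{Further problems, upstream and downstream.}

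\begin{itemize}
\item Your argument that $\lambda_1(E)-\lambda_2(E)$ is a constant $c$ is circular. Boundary spectral uniqueness on $X$ requires equality at matched energies, so it presupposes the shift.
\item That step can be repaired with your own symbol remark: for $\dim X\geq2$, the order $-1$ term gives $a_1|_{\partial X}+\lambda_1(E)=a_2|_{\partial X}+\lambda_2(E)$.
\item The paper's route makes the joint-eigenspace discussion unnecessary. Boundary determination for the product map gives $a_1(x)+b_1(y)=a_2(x)+b_2(y)$ on $\partial X\times Y$. Integrating in $y$ with the zero-mean normalization yields $b_1=b_2$ at once.
\item There is a sign slip: the gauge is $(a,b)\mapsto(a+c,b-c)$, whereas you wrote $a_1=a_2+c$ together with $b_1=b_2+c$.
\item Residues only give $K_\lambda=\sum_r(\cdot,\psi_{\lambda,r})_{\partial X}\psi_{\lambda,r}$. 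In multiple eigenspaces you must still produce orthonormal bases with equal normal traces. The paper factors $K_\lambda=T_1T_1^*=T_2T_2^*$ and builds a unitary $U$ with $T_2U=T_1$.
\item The boundary spectral theorem yields $a_1=F^*a_2$ for a boundary-fixing isometry $F$. You still have to show that $F$ is the identity.
\item The case $\dim X=1$ needs a separate argument, via norming constants and the interval inverse spectral theorem. There your symbol calculus is unavailable.
\end{itemize}
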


Theorem~\ref{thm:product} gives a uniqueness result for arbitrary smooth factor metrics, with neither simplicity nor injectivity of a geodesic ray transform assumed. Its technical contribution is a reduction that first determines the unknown transverse potential from boundary values and then recovers the full boundary spectral data on $X$ from the one measured product operator. The transverse spectrum need not be simple: a Weyl remainder estimate supplies enough \emph{distinct} sampling energies, and a bounded holomorphic function argument overcomes their lack of a finite accumulation point. The residue operators then identify correctly normalized boundary traces in every multiple eigenspace. Separation and complex-analytic methods have precedents in cylindrical inverse problems, including \cite{DKN}; the present argument combines them with boundary determination and boundary spectral uniqueness to handle an arbitrary-dimensional base and an initially unknown transverse summand. 

The normalization in \eqref{eq:additive-potentials} removes the ambiguity $(a,b)\mapsto(a+c,b-c)$. Without this normalization, the full function $q$ is still uniquely determined, while the two summands are determined only up to opposite constants. The case $b=0$ includes potentials depending only on $X$. The use of the full boundary operator is important; the nonuniqueness results for measurements on disjoint boundary sets in \cite{DKN} concern a different data set.

%Neither theorem assumes that the background admits a limiting Carleman weight; geometric obstructions to such weights are discussed in \cite{AAFG}. Also, the finite auxiliary phase in Theorem~\ref{thm:near-euclidean} does not change the measurement energy, in contrast to inverse problems posed at a large fixed frequency, such as \cite{MSS}.

\medskip

 The paper is organized as follows. Section~\ref{sec:aux-near} develops the auxiliary analytic machinery for Theorem~\ref{thm:near-euclidean}: coefficient extensions on a fixed cube, a uniformly invertible twisted Fourier model for the conjugated Euclidean operator, estimates for the complete principal and zeroth-order perturbations, exact finite-phase solutions, paired phases, and the frequency-set-independent Fourier error estimate. Section~\ref{sec:proof-near} combines these ingredients with the boundary integral identity and proves Theorem~\ref{thm:near-euclidean}. Section~\ref{sec:aux-product} collects the ingredients for Theorem~\ref{thm:product}: boundary determination, exact separation of the transverse modes, a quantitative supply of distinct transverse eigenvalues, holomorphic recovery of the full boundary family from the resulting negative-energy samples, and the inverse boundary spectral input. Section~\ref{sec:proof-product} then recovers the transverse summand, reconstructs the normalized boundary spectral data on $X$, removes the residual boundary-fixing isometry, and proves Theorem~\ref{thm:product}. Appendix~\ref{app:technical} contains only the two technical complements used by these arguments: the measurable construction of the paired finite-phase solutions and the resolvent, residue, and half-plane estimates for the boundary family.

\section{Auxiliary results for Theorem~\ref{thm:near-euclidean}}\label{sec:aux-near}

This section provides all auxiliary ingredients for the proof of Theorem~\ref{thm:near-euclidean}. We proceed in four steps. First, we extend the metric coefficients and potentials to a fixed Euclidean ball, introduce the twisted cube spaces, and construct a Fourier inverse for the conjugated free operator with a uniform $1/\tau$ bound. Second, we estimate the complete divergence-form perturbation and the potential term in the same scale of spaces and use a Neumann series to obtain exact solutions at one finite complex phase size. Third, we pair two such phases so that their sum is $-i\xi$ and control the full product amplitude in $H^s$; the measurable dependence on $\xi$ needed for integration is verified in Appendix~\ref{app:measurability}. Finally, we prove an $L^2$ Fourier error estimate whose constant is independent of the frequency set. These four ingredients are assembled, without any further construction, in Section~\ref{sec:proof-near}. Throughout this section, $n\geq3$, $s$ is given by \eqref{eq:s-index}, and $\opSeveneps=\opSevennorm{g-e}_{C^s(\overline\opSevenOm)}$. On $\opSevenR^n$ we use the Bessel-potential normalization
\[
 \opSevennorm f_{H^s(\opSevenR^n)}^2=\int_{\opSevenR^n}\opSevenang\xi^{2s}|\widehat f(\xi)|^2\,d\xi;
\]
on bounded domains any fixed equivalent Sobolev norm may be used, with the equivalence absorbed into constants. Since $s>n/2$, we repeatedly use the continuous embedding $H^s(\opSevenR^n)\hookrightarrow L^\infty(\opSevenR^n)$ and the Banach algebra property of $H^s(\opSevenR^n)$.

\subsection{The auxiliary coefficients, cube, and Fourier inverse}

Choose $0<\opSeveneps_*\leq1/2$, depending only on $n$ and the fixed coefficient norm, sufficiently small that
\[
 \opSevennorm{g-e}_{C^s(\overline\opSevenOm)}\leq\opSeveneps_*
 \quad\Longrightarrow\quad
 \tfrac12 I\leq g(x)\leq\tfrac32 I
 \quad(x\in\overline\opSevenOm).
\]
Indeed, the matrix operator norm is bounded by a dimensional constant times the chosen coefficient norm. We fix $\opSeveneps_*$ once and suppose $\opSeveneps\leq\opSeveneps_*$. On $\opSevenOm$, set
\[
 m_\opSevenOm=\mu-1,\qquad E_\opSevenOm=\mu g^{-1}-I,\qquad V_{j,\opSevenOm}=\mu q_j.
\]
The matrices $I+t(g(x)-I)$, $0\leq t\leq1$, stay in the same fixed uniformly positive definite set. Smoothness of the determinant and inverse on that set, the chain rule through order $s$, and the product rule give
\[
 \begin{split}
 \opSevennorm{m_\opSevenOm}_{C^s(\overline\opSevenOm)}+
       \opSevennorm{E_\opSevenOm}_{C^s(\overline\opSevenOm)}&\leq C\opSeveneps,\\
 \opSevennorm{V_{j,\opSevenOm}}_{H^s(\opSevenOm)}
       &\leq C\opSevennorm\mu_{W^{s,\infty}(\opSevenOm)}\opSevennorm{q_j}_{H^s(\opSevenOm)}
        \leq CM.
 \end{split}
\]
Here and below the constants in this section depend only on $\opSevenOm$ and $n$, unless otherwise indicated. Apply fixed bounded extension operators, componentwise for $E_\opSevenOm$, and then multiply by one fixed cutoff equal to one near $\overline\opSevenOm$. The standard extension results give real compactly supported extensions
\[
 m\in C_c^s(\opSevenR^n),\qquad
 E\in C_c^s(\opSevenR^n;\opSevenSym^2\opSevenR^n),\qquad V_j\in H^s(\opSevenR^n)
\]
such that
\begin{equation}\label{eq:extensions}
 \opSevennorm E_{W^{s,\infty}(\opSevenR^n)}+\opSevennorm m_{W^{s,\infty}(\opSevenR^n)}
       \leq C_{\opSevenext}\opSeveneps,
 \qquad
 \opSevennorm{V_j}_{H^s(\opSevenR^n)}\leq C_{\opSevenext}M.
\end{equation}
The extensions of $m_\opSevenOm$ and $E_\opSevenOm$ can be chosen genuinely $C^s$, as needed for the rotation argument in Appendix~\ref{app:measurability}. In a flattened boundary chart with interior coordinate $t\geq0$, extend a scalar coefficient $F$ to $t<0$ by
\[
 (\opSevenScript E F)(x',t)=\sum_{k=1}^{s+1}c_k F(x',-kt),
 \qquad
 \sum_{k=1}^{s+1}c_k(-k)^j=1\quad(0\leq j\leq s).
\]
The coefficients $c_k$ exist because the displayed system has an invertible Vandermonde matrix. On a sufficiently narrow fixed collar all reflected arguments stay in the chart. The identities match every normal derivative through order $s$ at $t=0$, and tangential differentiation matches all mixed derivatives of total order at most $s$. The finite sum is bounded in $C^s$. A fixed partition of unity, followed by a fixed compact cutoff, gives the asserted bounded global $C^s$ extension. The same real scalar construction is used for symmetric matrix entries, so reality and symmetry are preserved. For $V_j$, the usual bounded $H^s$ extension suffices. We use the Sobolev extension and multiplication results in \cite[Chapters~4--5]{AF} and \cite[Chapter~3]{McLean}. All supports and $\overline\opSevenOm$ lie in a fixed ball $B_{r_*}(0)$, and
\begin{equation}\label{eq:metric-identities}
 1+m=\mu,\qquad I+E=\mu g^{-1},\qquad V_j=\mu q_j
 \quad\text{on }\opSevenOm.
\end{equation}
The extended coefficients need not satisfy metric identities outside $\opSevenOm$. In particular, these extensions are not the zero extension of the unknown difference.

Choose $r_*<r_1<r_2<L$ and radial functions $\chi,\theta,\rho\in C_c^\infty(\opSevenR^n)$ such that
\[
 \begin{gathered}
 \chi=\theta=1\text{ near }\overline{B_{r_*}},
 \qquad \opSevensupp\chi\cup\opSevensupp\theta\Subset B_{r_1},\\
 \rho=1\text{ near }\overline{B_{r_1}},
 \qquad \opSevensupp\rho\Subset B_{r_2}.
 \end{gathered}
\]
For a positively oriented orthonormal frame $O=(\omega,\omega_2,\ldots,\omega_n)\in SO(n)$, let
\[
 Q_O=O(-L,L)^n,\qquad v_O(y)=v(Oy).
\]
We impose the auxiliary extension conditions
\[
 v_O(y+2Le_1)=-v_O(y),\qquad
 v_O(y+2Le_j)=v_O(y),\quad 2\leq j\leq n.
\]
These concern the cube, not $\partial\opSevenOm$. The coefficients $E$ and $V_j$, when used as multipliers, are extended \emph{periodically} across every pair of cube faces. Since they vanish near the faces, this introduces no loss of regularity. Multiplication by these periodic coefficients preserves the antiperiodic condition on the unknown in the first direction. A cutoff used as a multiplier is treated in the same way; when a cutoff such as $\chi$ is regarded as an element of a twisted space, it has the twisted extension instead. Both interpretations have the same values on the original cube. An orthonormal Fourier basis is
\begin{equation}\label{eq:twisted-lattice}
 \phi_\ell(x)=(2L)^{-n/2}e^{i\ell\cdot x},\qquad
 \ell=\frac\pi L\left((k_1+\tfrac12)\omega+
                  \sum_{j=2}^n k_j\omega_j\right),\quad k\in\opSevenZ^n.
\end{equation}
For $t\in\opSevenR$, let $H^t_\opSeventw(Q_O)$ have squared norm
$\sum_\ell\opSevenang\ell^{2t}|v_\ell|^2$, where $\opSevenang\ell=(1+|\ell|^2)^{1/2}$ and the sum is over \eqref{eq:twisted-lattice}. For $\tau\geq1$, define
\[
 \begin{split}
 w_\tau(\ell)&=1+|\ell|^2/\tau^2,\\
 \opSevennorm v_{Y_\tau}^2&=\sum_\ell\opSevenang\ell^{2s}w_\tau(\ell)|v_\ell|^2,
 \qquad
 \opSevennorm F_{Z_\tau}^2=\sum_\ell\opSevenang\ell^{2s}w_\tau(\ell)^{-1}|F_\ell|^2.
 \end{split}
\]
For fixed $\tau$, the spaces $Y_\tau$ and $Z_\tau$ are $H^{s+1}_\opSeventw$ and $H^{s-1}_\opSeventw$, respectively, with equivalent norms depending on $\tau$. Directly from the weights,
\begin{equation}\label{eq:weighted-norms}
 \begin{gathered}
 \opSevennorm v_{Y_\tau}^2=\opSevennorm v_{H^s_\opSeventw}^2+
                          \tau^{-2}\opSevennorm{\nabla v}_{H^s_\opSeventw}^2,\\
 \opSevennorm v_{H^s_\opSeventw}\leq\opSevennorm v_{Y_\tau},\qquad
 \opSevennorm{\nabla v}_{H^s_\opSeventw}\leq\tau\opSevennorm v_{Y_\tau},\qquad
 \opSevennorm v_{H^{s+1}_\opSeventw}\leq\tau\opSevennorm v_{Y_\tau}.
 \end{gathered}
\end{equation}
Every fixed cutoff above is supported away from the cube faces. If $\eta$ is a fixed smooth cutoff supported in $B_{r_2}$ and $[\eta v]_0$ denotes zero extension from $Q_O$, then
\begin{equation}\label{eq:localization}
 \opSevennorm{[\eta v]_0}_{H^t(\opSevenR^n)}
 \leq C_\eta\opSevennorm v_{H^t_\opSeventw(Q_O)},\qquad t=s-1,s,s+1.
\end{equation}
We make the uniformity in the rotating cube explicit. For each nonnegative integer $t$, Parseval's identity and the multinomial expansion yield
\[
 \opSevennorm v_{H^t_\opSeventw(Q_O)}^2
 =\sum_{|\alpha|\leq t}
   \frac{t!}{(t-|\alpha|)!\,\alpha!}
      \opSevennorm{\partial^\alpha v}_{L^2(Q_O)}^2.
\]
The identity holds first for finite Fourier sums and then by completion; the derivatives are in the fixed Euclidean coordinates. Since $\eta v$ vanishes in a neighborhood of every face, its zero extension has distributional derivatives equal to the zero extensions of the ordinary weak derivatives, with no boundary terms. Applying the product rule in the preceding identity proves \eqref{eq:localization}. All derivatives of $\eta$ and all coefficient weights are fixed, so the constant is independent of $O$ and $\tau$. Conversely, if $F\in H^t(\opSevenR^n)$ is supported in the fixed interior ball, the same identity and Euclidean Plancherel give
\[
 \opSevennorm{F|_{Q_O}}_{H^t_\opSeventw(Q_O)}=\opSevennorm F_{H^t(\opSevenR^n)}.
\]
This equality uses the stated Bessel-potential normalization and is justified for nonsmooth $F$ by interior-supported smooth approximation. Thus the reverse norm comparison used below is uniform as well. These are the usual integer-order localization facts \cite[Chapter~3]{McLean}, with the rotating-cube dependence now specified. The constant function $1$ does not satisfy the antiperiodic condition, which is why the auxiliary equation uses $\chi$.

\begin{lemma}\label{lem:inverse}
Let $\tau\geq1$ and
\begin{equation}\label{eq:single-phase}
 \zeta=\sigma\tau\omega+ib,\qquad
 \sigma\in\{-1,1\},\qquad \omega\cdot b=0,\qquad |b|=\tau.
\end{equation}
Then $\zeta\cdot\zeta=0$, where the dot product is complex bilinear. The operator $P_\zeta=-\Delta-2\zeta\cdot\nabla$ is a bounded bijection from $Y_\tau(Q_O)$ onto $Z_\tau(Q_O)$. Its inverse $G_\zeta$ satisfies
\begin{equation}\label{eq:inverse-bounds}
 \opSevennorm{G_\zeta F}_{Y_\tau}\leq\frac{C_G}{\tau}\opSevennorm F_{Z_\tau},
 \qquad
 \opSevennorm{G_\zeta F}_{Y_\tau}\leq\frac{C_G}{\tau}\opSevennorm F_{H^s_\opSeventw}
 \quad(F\in H^s_\opSeventw).
\end{equation}
The constant $C_G$ depends only on $L$, not on the frame, $b$, $\sigma$, or $\tau$.
\end{lemma}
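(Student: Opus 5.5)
The operator diagonalizes in the twisted Fourier basis \eqref{eq:twisted-lattice}, so the plan is to prove a pointwise lower bound for its symbol on the twisted lattice. First, $\zeta\cdot\zeta=\tau^2-|b|^2+2i\sigma\tau\,\omega\cdot b=0$ by \eqref{eq:single-phase}. Next, each $\phi_\ell$ satisfies the auxiliary (anti)periodicity conditions, and
\[
 P_\zeta\phi_\ell=p(\ell)\phi_\ell,\qquad
 p(\ell)=|\ell|^2-2i\zeta\cdot\ell=|\ell|^2+2b\cdot\ell-2i\sigma\tau\,\omega\cdot\ell .
\]
So $P_\zeta$ acts as the Fourier multiplier $p(\ell)$, and $G_\zeta$ is the multiplier $1/p(\ell)$ provided $p(\ell)\ne0$ for every $\ell$.

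The key lower bound comes from the imaginary part. On the twisted lattice,
\[
 \omega\cdot\ell=\frac{\pi}{L}\Bigl(k_1+\tfrac12\Bigr),
 \qquad\text{so}\qquad |\omega\cdot\ell|\ge \frac{\pi}{2L}.
\]
This is exactly why the antiperiodic condition in the $\omega$ direction is imposed. Hence
\[
 |p(\ell)|\ge 2\tau|\omega\cdot\ell|\ge \frac{\pi\tau}{L}.
\]
This bound is independent of $b$ and $O$, and in particular $p(\ell)$ never vanishes.

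We also need $|p(\ell)|\gtrsim|\ell|^2$ when $|\ell|$ is large compared with $\tau$. Since $|\mathrm{Re}\,p(\ell)|\ge|\ell|^2-2\tau|\ell|$, for $|\ell|\ge4\tau$ this gives $|p(\ell)|\ge|\ell|^2/2$. Combining the two regimes:
\begin{itemize}
 \item if $|\ell|\le4\tau$, then $|\ell|^2/\tau\le 16\tau$, so $|p|\ge(\pi/L)\tau\ge c_L(\tau+|\ell|^2/\tau)$;
 \item if $|\ell|\ge4\tau$, then $|\ell|^2/2\ge \tau^2\ge\tau$ (using $\tau\ge1$), so again $|p|\ge c(\tau+|\ell|^2/\tau)$.
\end{itemize}
Therefore, with $c_L>0$ depending only on $L$,
\[
 |p(\ell)|\ge c_L\,\tau\, w_\tau(\ell)\qquad\text{for all }\ell .
\]

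The bounds \eqref{eq:inverse-bounds} now follow from Parseval. For the first,
\[
 \opSevennorm{G_\zeta F}_{Y_\tau}^2=\sum_\ell\opSevenang\ell^{2s}w_\tau\,|F_\ell|^2/|p|^2
 \le (c_L\tau)^{-2}\sum_\ell\opSevenang\ell^{2s}w_\tau^{-1}|F_\ell|^2
 =(c_L\tau)^{-2}\opSevennorm F_{Z_\tau}^2 .
\]
The second bound follows from the first, since $w_\tau\ge1$ gives $\opSevennorm F_{Z_\tau}\le\opSevennorm F_{H^s_\opSeventw}$.

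Boundedness of $P_\zeta\colon Y_\tau\to Z_\tau$ holds for each fixed $\tau$, because $|p(\ell)|\le C\tau^2 w_\tau(\ell)$ and hence
\[
 \opSevennorm{P_\zeta v}_{Z_\tau}\le C\tau^2\opSevennorm v_{Y_\tau}.
\]
Bijectivity is immediate: the inverse multiplier maps $Z_\tau$ into $Y_\tau$ and composes to the identity on both sides.

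There is no serious obstacle here. The one point requiring care is the uniform lower bound on $\omega\cdot\ell$. It depends on the lattice being built from the same frame $O$ whose first vector is $\omega$, so that $\omega\cdot\ell$ equals exactly $\pi(k_1+\frac12)/L$ independently of the frame and of $b$. The resulting constant $C_G=1/c_L$ depends only on $L$.
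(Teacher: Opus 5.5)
Your proof is correct and follows the paper's argument essentially step for step: the same symbol, the same bound $|p_\zeta(\ell)|\geq\pi\tau/L$ from the half-integer shift, the same real-part bound for $|\ell|\geq4\tau$, and the same Parseval computation. Your inequality $|p(\ell)|\geq c_L\tau w_\tau(\ell)$ is just a restatement of the paper's $\sup_\ell w_\tau(\ell)/|p_\zeta(\ell)|\leq C_G/\tau$. The paper also checks that the coefficientwise inverse identities extend from finite Fourier sums by density and agree with the distributional operator; you take this routine point for granted.
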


\begin{proof}
The phase assumptions give $\zeta\cdot\zeta=\tau^2-|b|^2+2i\sigma\tau\omega\cdot b=0$ and $|\zeta|=\sqrt2\tau$. On the basis \eqref{eq:twisted-lattice},
\begin{equation}\label{eq:fourier-symbol}
 P_\zeta\phi_\ell=p_\zeta(\ell)\phi_\ell,
 \qquad p_\zeta(\ell)=|\ell|^2+2b\cdot\ell-2i\sigma\tau\omega\cdot\ell.
\end{equation}
The half-integer shift gives $|\omega\cdot\ell|\geq\pi/(2L)$, and hence
\[
 |p_\zeta(\ell)|\geq2\tau|\omega\cdot\ell|\geq\pi\tau/L.
\]
If $|\ell|\geq4\tau$, then also
\[
 \operatorname{Re}p_\zeta(\ell)\geq|\ell|^2-2\tau|\ell|
                            \geq\tfrac12|\ell|^2.
\]
For $|\ell|\leq4\tau$, $w_\tau(\ell)\leq17$. For $|\ell|>4\tau$,
\[
 \frac{w_\tau(\ell)}{|p_\zeta(\ell)|}
 \leq\frac2{|\ell|^2}+\frac2{\tau^2}
 \leq\frac{17}{8\tau^2}.
\]
Thus, with $C_G=\max\{17L/\pi,17/8\}$,
\[
 \sup_\ell\frac{w_\tau(\ell)}{|p_\zeta(\ell)|}\leq\frac{C_G}{\tau}.
\]
Define $G_\zeta F$ by the Fourier coefficients $F_\ell/p_\zeta(\ell)$. Then
\[
 \begin{split}
 \sum_\ell\opSevenang\ell^{2s}w_\tau(\ell)
               \frac{|F_\ell|^2}{|p_\zeta(\ell)|^2}
 &=\sum_\ell\frac{w_\tau(\ell)^2}{|p_\zeta(\ell)|^2}
             \opSevenang\ell^{2s}w_\tau(\ell)^{-1}|F_\ell|^2\\
 &\leq\frac{C_G^2}{\tau^2}\opSevennorm F_{Z_\tau}^2.
 \end{split}
\]
This proves convergence in $Y_\tau$ and the first estimate in \eqref{eq:inverse-bounds}. The second follows from $w_\tau^{-1}\leq1$. Conversely, $|p_\zeta(\ell)|\leq C\tau^2w_\tau(\ell)$ implies
\[
 \opSevennorm{P_\zeta v}_{Z_\tau}^2
 \leq C^2\tau^4\opSevennorm v_{Y_\tau}^2.
\]
Finite Fourier sums are dense in both $Y_\tau$ and $Z_\tau$. On such sums, multiplication and division of the Fourier coefficients give the two inverse identities coefficient by coefficient. The boundedness of $P_\zeta:Y_\tau\to Z_\tau$ and of $G_\zeta:Z_\tau\to Y_\tau$ then allows passage to the respective limits, yielding
$P_\zeta G_\zeta F=F$ for every $F\in Z_\tau$ and
$G_\zeta P_\zeta v=v$ for every $v\in Y_\tau$. Thus the multiplier construction proves both surjectivity and injectivity, not merely the a priori estimate. Convergence in these Sobolev spaces implies convergence in distributions on the interior of $Q_O$, so the Fourier inverse identities also hold for the distributional differential operator used below.
\end{proof}

\subsection{The principal perturbation and exact solutions}

For a phase satisfying \eqref{eq:single-phase}, set
\[
 D_\zeta=\nabla+\zeta,\qquad
 \opSevencE_\zeta v=-D_\zeta\cdot(ED_\zeta v).
\]
The outer derivative is distributional. Since $E$ is supported in the interior of the cube, it creates no terms on the cube faces.

\begin{lemma}\label{lem:perturbation}
For $\opSeveneps\leq\opSeveneps_*$, $\tau\geq1$, and $v\in Y_\tau(Q_O)$,
\begin{equation}\label{eq:perturbation-estimates}
 \begin{gathered}
 \opSevennorm{\opSevencE_\zeta v}_{Z_\tau}\leq C\opSeveneps\tau^2\opSevennorm v_{Y_\tau},
 \qquad
 \opSevennorm{G_\zeta\opSevencE_\zeta v}_{Y_\tau}\leq C\opSeveneps\tau\opSevennorm v_{Y_\tau},\\
 \opSevennorm{G_\zeta(V_jv)}_{Y_\tau}
 \leq C\frac M\tau\opSevennorm v_{Y_\tau},\qquad j=1,2.
 \end{gathered}
\end{equation}
In particular, for $C_K\geq1$ depending only on $\opSevenOm$ and $n$,
\begin{equation}\label{eq:operator-smallness}
 \opSevennorm{G_\zeta(\opSevencE_\zeta+V_j)}_{Y_\tau\to Y_\tau}
 \leq C_K\left(\opSeveneps\tau+\frac M\tau\right).
\end{equation}
\end{lemma}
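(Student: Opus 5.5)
The plan is to reduce everything to two ingredients: multiplier bounds for the periodically extended coefficients on the twisted spaces, and the inverse bounds \eqref{eq:inverse-bounds} of Lemma~\ref{lem:inverse}. For the principal term, expand
\[
 \opSevencE_\zeta v=-\nabla\cdot(E\nabla v)-\nabla\cdot(E\zeta v)-\zeta\cdot(E\nabla v)-\zeta\cdot(E\zeta v).
\]
Multiplication by a periodic $C^s$ coefficient vanishing near the cube faces is bounded on $H^t_\opSeventw(Q_O)$ for $t=s-1,s$, with norm $\leq C\opSevennorm{E}_{W^{s,\infty}}\leq C\opSeveneps$ by \eqref{eq:extensions}. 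I would check this uniformly in $O$ using the multinomial identity for integer $t$ preceding Lemma~\ref{lem:inverse} and the Leibniz rule; for $t=s-1\geq0$ this is an integer as well. Since the coefficient is periodic and the unknown is twisted, the product stays in the twisted space.

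Next I would bound the four pieces in $Z_\tau$, using $\opSevennorm{F}_{Z_\tau}\leq\opSevennorm{F}_{H^s_\opSeventw}$ (because $w_\tau^{-1}\leq1$) and $\opSevennorm{\nabla\cdot F}_{Z_\tau}\leq\tau\opSevennorm F_{H^s_\opSeventw}$. The latter holds since $|\ell|^2w_\tau(\ell)^{-1}\leq\tau^2$, i.e.\ $|\ell|^2\leq\tau^2+|\ell|^2$. Combining these with \eqref{eq:weighted-norms} and $|\zeta|=\sqrt2\tau$ gives the following estimates:
\begin{align*}
 \opSevennorm{\nabla\cdot(E\nabla v)}_{Z_\tau}&\leq\tau\, C\opSeveneps\opSevennorm{\nabla v}_{H^s_\opSeventw}\leq C\opSeveneps\tau^2\opSevennorm v_{Y_\tau},\\
 \opSevennorm{\nabla\cdot(E\zeta v)}_{Z_\tau}&\leq \tau\cdot C\opSeveneps\tau\opSevennorm v_{H^s_\opSeventw},\\
 \opSevennorm{\zeta\cdot(E\nabla v)}_{Z_\tau}&\leq C\tau\opSeveneps\cdot\tau\opSevennorm v_{Y_\tau},\\
 \opSevennorm{\zeta\cdot(E\zeta v)}_{Z_\tau}&\leq C\opSeveneps\tau^2\opSevennorm v_{H^s_\opSeventw}.
\end{align*}
Summing yields the first estimate of \eqref{eq:perturbation-estimates}. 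The second then follows from the first bound in \eqref{eq:inverse-bounds}, which gives a factor $C_G/\tau$.

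For the potential term I would use the second bound in \eqref{eq:inverse-bounds}: $\opSevennorm{G_\zeta(V_jv)}_{Y_\tau}\leq C_G\tau^{-1}\opSevennorm{V_jv}_{H^s_\opSeventw}$. It then remains to show $\opSevennorm{V_jv}_{H^s_\opSeventw}\leq C\opSevennorm{V_j}_{H^s(\opSevenR^n)}\opSevennorm v_{H^s_\opSeventw}$. Since $V_j$ is supported in $B_{r_*}$ and $\theta=1$ there, write $V_jv=V_j\cdot[\theta v]_0$ on the cube. Apply \eqref{eq:localization} to $[\theta v]_0$, use the algebra property of $H^s(\opSevenR^n)$, and transfer back with the reverse norm identity for functions supported in the interior ball. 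Finally, \eqref{eq:extensions} gives the factor $C_{\opSevenext}M$.

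Adding the two operator bounds gives \eqref{eq:operator-smallness}, with $C_K$ taken as the maximum of the constants and $1$. The main obstacle I expect is making the multiplier bound for $E$ on twisted spaces uniform in the frame $O$ and at the level $t=s-1$. I would avoid fractional orders entirely by using only integer $t$ and the Leibniz rule in the fixed Euclidean coordinates, exactly as in the proof of \eqref{eq:localization}. This is where the $C^s$ hypothesis enters: at most $s$ derivatives ever fall on $E$.
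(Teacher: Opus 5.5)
Your proposal is correct and follows essentially the paper's argument. The paper keeps $D_\zeta$ unsplit, bounding $\opSevennorm{ED_\zeta v}_{H^s_{\opSeventw}}\leq C\opSeveneps\tau\opSevennorm v_{Y_\tau}$ by the Leibniz rule and then $\opSevennorm{D_\zeta\cdot F}_{Z_\tau}\leq\sqrt3\,\tau\opSevennorm F_{H^s_{\opSeventw}}$ by the Fourier multiplier; this is your four-term estimate packaged as one, and the paper treats $V_j$ by the same localization, algebra and reverse-comparison route (with $\rho$ in place of your $\theta$), while your multiplier bound at $t=s-1$ is never actually used, since every term is routed through $H^s_{\opSeventw}$.
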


\begin{proof}
We estimate the inner derivative in $H^s$ and the outer derivative in $Z_\tau$. By \eqref{eq:weighted-norms},
\[
 \opSevennorm{D_\zeta v}_{H^s_\opSeventw}
 \leq\opSevennorm{\nabla v}_{H^s_\opSeventw}+|\zeta|\opSevennorm v_{H^s_\opSeventw}
 \leq(1+\sqrt2)\tau\opSevennorm v_{Y_\tau}.
\]
The product rule and \eqref{eq:extensions} yield
\[
 \opSevennorm{ED_\zeta v}_{H^s_\opSeventw}
 \leq C\opSevennorm E_{W^{s,\infty}(\opSevenR^n)}\opSevennorm{D_\zeta v}_{H^s_\opSeventw}
 \leq C\opSeveneps\tau\opSevennorm v_{Y_\tau}.
\]
In detail, for a multi-index $\alpha$ with $|\alpha|\leq s$,
\[
 \partial^\alpha(ED_\zeta v)_i
 =\sum_{j=1}^n\sum_{\gamma\leq\alpha}
       \binom{\alpha}{\gamma}(\partial^\gamma E^{ij})
                \partial^{\alpha-\gamma}(D_\zeta v)_j.
\]
Each derivative of $E$ is bounded in $L^\infty$, and the other factor is estimated in $L^2$. Interior norm comparisons are uniform in the frame.

For $F\in H^s_\opSeventw(Q_O;\opSevenC^n)$, the Fourier coefficient of $D_\zeta\cdot F$ is $(i\ell+\zeta)\cdot F_\ell$. The inequality
\[
 \frac{(|\ell|+\sqrt2\tau)^2}{1+|\ell|^2/\tau^2}\leq3\tau^2
\]
gives
\[
 \begin{split}
 \opSevennorm{D_\zeta\cdot F}_{Z_\tau}^2
 &\leq\sum_\ell\opSevenang\ell^{2s}w_\tau(\ell)^{-1}
                    (|\ell|+\sqrt2\tau)^2|F_\ell|^2\\
 &\leq3\tau^2\opSevennorm F_{H^s_\opSeventw}^2.
 \end{split}
\]
Approximation by finite Fourier sums shows that this Fourier expression agrees with the distributional divergence. Applying it to $F=ED_\zeta v$ proves the first bound in \eqref{eq:perturbation-estimates}, and Lemma~\ref{lem:inverse} proves the second. Only derivatives of $E$ of order at most $s$ enter: the final divergence is controlled by its Fourier multiplier, rather than by differentiating the expanded divergence $s$ times.

For the potential term, let $\widetilde v=[\rho v]_0$. By \eqref{eq:localization},
\[
 \opSevennorm{\widetilde v}_{H^s(\opSevenR^n)}\leq C\opSevennorm v_{H^s_\opSeventw}.
\]
Since $\rho=1$ near $\opSevensupp V_j$, the products $V_jv$ and $V_j\widetilde v$ agree on $Q_O$. Moreover $V_j\widetilde v$ is supported in the same fixed interior ball. The reverse interior norm comparison following \eqref{eq:localization}, followed by the $H^s(\opSevenR^n)$ algebra estimate (valid because $s>n/2$), therefore gives
\[
 \opSevennorm{V_jv}_{H^s_\opSeventw(Q_O)}
 \leq C\opSevennorm{V_j\widetilde v}_{H^s(\opSevenR^n)}
 \leq C\opSevennorm{V_j}_{H^s(\opSevenR^n)}\opSevennorm{\widetilde v}_{H^s(\opSevenR^n)}
 \leq CM\opSevennorm v_{Y_\tau}.
\]
In particular $V_jv\in H^s_\opSeventw\subset Z_\tau$, so the application of $G_\zeta$ below is legitimate.
The second bound in \eqref{eq:inverse-bounds} proves the remaining estimate. Adding the operator bounds proves \eqref{eq:operator-smallness}.
\end{proof}

\begin{lemma}\label{lem:exact-solutions}
There are $c_0\in(0,1]$ and $C_1>0$, depending only on $\opSevenOm$ and $n$, such that the following holds. If $\opSeveneps\leq\opSeveneps_*$ and
\begin{equation}\label{eq:delta-definition}
 \tau\geq1,\qquad \delta_\tau:=\opSeveneps\tau+M/\tau\leq c_0,
\end{equation}
then for each phase \eqref{eq:single-phase} and $j=1,2$ there is $\psi_j\in Y_\tau(Q_O)$ satisfying
\[
 u_j(x)=e^{\zeta\cdot x}(1+\psi_j(x))\in H^1(\opSevenOm),\qquad
 L_{g,q_j}u_j=0\text{ in }\opSevenOm,\qquad
 \opSevennorm{\psi_j}_{Y_\tau}\leq C_1\delta_\tau.
\]
For every fixed interior cutoff $\eta$ as in \eqref{eq:localization},
\begin{equation}\label{eq:localized-correction}
 \opSevennorm{[\eta\psi_j]_0}_{H^s(\opSevenR^n)}\leq C_\eta\delta_\tau.
\end{equation}
\end{lemma}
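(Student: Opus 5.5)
The plan is to conjugate the divergence-form operator by the complex exponential, so that the equation for the correction $\psi_j$ becomes a fixed-point problem on the twisted cube space $Y_\tau(Q_O)$. That problem is then solved by a Neumann series using Lemma~\ref{lem:inverse} and Lemma~\ref{lem:perturbation}.

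First, the conjugated equation. On $\opSevenOm$ we have $\mu L_{g,q_j}u=-\nabla\cdot(\mu g^{-1}\nabla u)+\mu q_ju$. By \eqref{eq:metric-identities} this equals
\[
 -\nabla\cdot((I+E)\nabla u)+V_ju .
\]
For $u=e^{\zeta\cdot x}w$ we have $\nabla u=e^{\zeta\cdot x}D_\zeta w$ and $\nabla\cdot(e^{\zeta\cdot x}F)=e^{\zeta\cdot x}D_\zeta\cdot F$. Since $\zeta\cdot\zeta=0$, also $-D_\zeta\cdot D_\zeta w=-\Delta w-2\zeta\cdot\nabla w=P_\zeta w$. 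Hence
\[
 \mu L_{g,q_j}(e^{\zeta\cdot x}w)=e^{\zeta\cdot x}\bigl(P_\zeta w+\opSevencE_\zeta w+V_jw\bigr)
 \quad\text{on }\opSevenOm .
\]
The constant $1$ is not admissible in the twisted space, so on the cube we replace $w=1+\psi$ by $w=\chi+\psi$. Because $\chi=1$ near $\overline{B_{r_*}}$, which contains $\overline\opSevenOm$ and the supports of $E$ and $V_j$, the two choices agree on $\opSevenOm$. We therefore solve, in $Z_\tau(Q_O)$,
\[
 P_\zeta\psi+(\opSevencE_\zeta+V_j)\psi=-(\opSevencE_\zeta+V_j)\chi .
\]
Writing $K_j=G_\zeta(\opSevencE_\zeta+V_j)$ and applying $G_\zeta$ from Lemma~\ref{lem:inverse}, this is equivalent to $(I+K_j)\psi=-K_j\chi$ in $Y_\tau$.

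Second, the estimates. By \eqref{eq:operator-smallness}, $\opSevennorm{K_j}_{Y_\tau\to Y_\tau}\le C_K\delta_\tau$. Choosing $c_0=\min\{1,1/(2C_K)\}$ makes $I+K_j$ invertible by a Neumann series with inverse of norm at most $2$. This gives
\[
 \opSevennorm{\psi_j}_{Y_\tau}\le 2C_K\delta_\tau\opSevennorm\chi_{Y_\tau}.
\]
It remains to show that $\opSevennorm{\chi}_{Y_\tau}\le C$ uniformly in $O$, $b$, $\sigma$, $\tau$. Since $\tau\ge1$ we have $w_\tau(\ell)\le\opSevenang\ell^2$, so $\opSevennorm\chi_{Y_\tau}\le\opSevennorm\chi_{H^{s+1}_\opSeventw}$. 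The latter equals $\opSevennorm\chi_{H^{s+1}(\opSevenR^n)}$ by the interior norm identity following \eqref{eq:localization}, because $\chi$ is supported in the fixed interior ball. This yields $C_1=2C_K\opSevennorm\chi_{H^{s+1}}$. The localized bound \eqref{eq:localized-correction} then follows from \eqref{eq:localization} with $t=s$ together with $\opSevennorm{\psi_j}_{H^s_\opSeventw}\le\opSevennorm{\psi_j}_{Y_\tau}$.

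Third, transferring back to $\opSevenOm$; this is where I expect the only real care to be needed. By the last sentence of the proof of Lemma~\ref{lem:inverse}, the identity $P_\zeta\psi+(\opSevencE_\zeta+V_j)(\chi+\psi)=0$ holds in distributions on the interior of $Q_O$. Since $\psi_j\in H^{s+1}_\opSeventw$, the function $u_j=e^{\zeta\cdot x}(1+\psi_j)$ belongs to $H^{s+1}(\opSevenOm)\subset H^1(\opSevenOm)$. Multiplying by the smooth factor $e^{\zeta\cdot x}$ and reversing the conjugation identities is then legitimate at the level of distributions on $\opSevenOm$: the products involve $C^s$ coefficients and $H^s$ functions, and $D_\zeta$ acts as a first-order operator. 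This gives $\mu L_{g,q_j}u_j=0$ in $\mathcal D'(\opSevenOm)$. Since $\mu\ge c>0$, it follows that $L_{g,q_j}u_j=0$. Finally, integrating against test functions shows that this distributional identity coincides with the weak form of $L_{g,q_j}u_j=0$ used to define the Dirichlet-to-Neumann map.
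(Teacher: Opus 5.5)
Your proposal is correct and follows the paper's proof essentially step by step. It uses the same conjugation identity on $\Omega$, the same fixed-point equation $\psi_j=-K_j(\chi+\psi_j)$ solved by a Neumann series with $c_0=\min\{1,(2C_K)^{-1}\}$, the same return to $\Omega$ via $P_\zeta G_\zeta=I$ together with $\chi=1$ and $P_\zeta 1=0$ near $\overline{\Omega}$, and the same localization bound; your justification of the uniform bound on $\|\chi\|_{Y_\tau}$ (via $w_\tau(\ell)\le\langle\ell\rangle^2$ and the interior norm identity) spells out what the paper states in one line.
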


\begin{proof}
The identities \eqref{eq:metric-identities} give, in $\opSevenOm$,
\[
 \mu L_{g,q_j}=-\operatorname{div}((I+E)\nabla)+V_j.
\]
Since $e^{-\zeta\cdot x}\partial_k(e^{\zeta\cdot x}w)=\partial_kw+\zeta_kw$ and $\zeta\cdot\zeta=0$,
\begin{equation}\label{eq:conjugation}
 e^{-\zeta\cdot x}
 \bigl[-\operatorname{div}((I+E)\nabla)+V_j\bigr]
      (e^{\zeta\cdot x}w)
 =(P_\zeta+\opSevencE_\zeta+V_j)w.
\end{equation}
For completeness, the principal perturbation has the full expansion
\[
 \begin{split}
 \opSevencE_\zeta w={}&-E^{ij}\partial_i\partial_jw
       -(\partial_iE^{ij})\partial_jw-2E^{ij}\zeta_i\partial_jw\\
       &-(\partial_iE^{ij})\zeta_jw-E^{ij}\zeta_i\zeta_jw.
 \end{split}
\]
Symmetry of $E$ combines the two terms with one derivative of $w$ and one factor of $\zeta$. Every displayed term is included in Lemma~\ref{lem:perturbation}.

Set $K_j=G_\zeta(\opSevencE_\zeta+V_j)$ and choose
\begin{equation}\label{eq:c-zero}
 c_0=\min\{1,(2C_K)^{-1}\}.
\end{equation}
Then $\opSevennorm{K_j}_{Y_\tau\to Y_\tau}\leq1/2$. Since $\chi$ has a fixed $H^{s+1}$ norm, $\opSevennorm\chi_{Y_\tau}\leq C_\chi$ uniformly for $\tau\geq1$. The Neumann series
\begin{equation}\label{eq:neumann-series}
 \psi_j=-(I+K_j)^{-1}K_j\chi
       =-\sum_{m=0}^\infty(-K_j)^mK_j\chi
\end{equation}
converges in $Y_\tau$, solves
\begin{equation}\label{eq:correction-equation}
 \psi_j=-K_j(\chi+\psi_j),\qquad
 \opSevennorm{\psi_j}_{Y_\tau}\leq2C_KC_\chi\delta_\tau,
\end{equation}
and permits the choice $C_1=2C_KC_\chi$.

Lemma~\ref{lem:perturbation} shows that $(\opSevencE_\zeta+V_j)(\chi+\psi_j)\in Z_\tau$. Hence the identity $P_\zeta G_\zeta=I$ from Lemma~\ref{lem:inverse} may be applied to \eqref{eq:correction-equation}; it gives
\[
 P_\zeta\psi_j+(\opSevencE_\zeta+V_j)(\chi+\psi_j)=0\quad\text{on }Q_O.
\]
On a neighborhood of $\overline\opSevenOm$, $\chi=1$ and $P_\zeta1=0$. Hence
\[
 (P_\zeta+\opSevencE_\zeta+V_j)(1+\psi_j)=0\quad\text{in }\opSevenOm.
\]
Equation \eqref{eq:conjugation} and $\mu>0$ imply $L_{g,q_j}u_j=0$. All identities hold distributionally by the weak product rule; the products are well defined because $s>n/2$ implies $q_j\in L^\infty(\opSevenOm)$ and the extended coefficients have the regularity stated in \eqref{eq:extensions}. For fixed finite $\tau$, $\psi_j\in H^{s+1}_\opSeventw$, and the exponential and its first derivatives are bounded on $\opSevenOm$, so $u_j\in H^1(\opSevenOm)$ and its trace belongs to $H^{1/2}(\partial\opSevenOm)$. No bound uniform in $\tau$ for this trace is needed. Finally, \eqref{eq:correction-equation} gives $\opSevennorm{\psi_j}_{Y_\tau}\leq C_1\delta_\tau$ and hence $\opSevennorm{\psi_j}_{H^s_\opSeventw}\leq C_1\delta_\tau$ by \eqref{eq:weighted-norms}; applying the localization estimate \eqref{eq:localization} proves \eqref{eq:localized-correction}. No homogeneous equation is asserted on the part of the cube where $\chi\ne1$.
\end{proof}

\subsection{Paired phases and their product}

We next choose two phases whose sum is $-i\xi$. The volume density will be included in the error amplitude, so that the leading term in the integral identity is the Fourier transform of the unweighted difference $q_1-q_2$.

\begin{lemma}\label{lem:paired-solutions}
Assume the hypotheses of Lemma~\ref{lem:exact-solutions}. For each $\xi\in B_\tau$, there are phases $\zeta_1(\xi),\zeta_2(\xi)$ and exact solutions
$u_j(x,\xi)=e^{\zeta_j(\xi)\cdot x}(1+\psi_j(x,\xi))$ such that
\begin{equation}\label{eq:paired-phases}
 \zeta_j\cdot\zeta_j=0,\qquad \zeta_1+\zeta_2=-i\xi.
\end{equation}
There is a family $a_\xi=a(\cdot,\xi)\in H^s(\opSevenR^n)$ with
\begin{equation}\label{eq:paired-product}
 \begin{gathered}
 \mu(x)u_1(x,\xi)u_2(x,\xi)=e^{-ix\cdot\xi}(1+a_\xi(x))
       \quad\text{for almost every }x\in\opSevenOm,\\
 \sup_{\xi\in B_\tau}\opSevennorm{a_\xi}_{H^s(\opSevenR^n)}\leq C_2\delta_\tau.
 \end{gathered}
\end{equation}
Each $a_\xi$ is supported in $B_{r_1}$, and $\xi\mapsto a_\xi$ can be chosen strongly measurable with values in $H^s(\opSevenR^n)$.
\end{lemma}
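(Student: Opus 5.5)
The construction has three steps: choose the two phases explicitly, build each solution from Lemma~\ref{lem:exact-solutions} on a cube adapted to the phase, and then expand the product and bound it with the $H^s$ algebra property.

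\textbf{Choice of phases.} Fix $\xi\in B_\tau$ and pick a unit vector $\omega\perp\xi$ (possible since $n\geq3$; any unit vector if $\xi=0$). Pick a unit vector $\beta$ orthogonal to both $\omega$ and $\xi$, again possible since $n\geq3$. Set
\[
 \zeta_1=\tau\omega+i\bigl(-\tfrac{\xi}{2}+\gamma\beta\bigr),\qquad
 \zeta_2=-\tau\omega+i\bigl(-\tfrac{\xi}{2}-\gamma\beta\bigr),\qquad
 \gamma=\sqrt{\tau^2-|\xi|^2/4}.
\]
Then $b_j=\mp\xi/2\pm\gamma\beta$ satisfies $\omega\cdot b_j=0$ and $|b_j|=\tau$. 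Hence each $\zeta_j$ has the form \eqref{eq:single-phase} with $\sigma=\pm1$, and $\zeta_1+\zeta_2=-i\xi$. The condition $|\xi|<\tau$ is what makes $\gamma$ real.

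\textbf{Exact solutions.} Complete $\omega$ to a positively oriented frame $O$, and apply Lemma~\ref{lem:exact-solutions} on $Q_O$ with $q_1$ and phase $\zeta_1$, and with $q_2$ and phase $\zeta_2$. This gives $u_j=e^{\zeta_j\cdot x}(1+\psi_j)$. Note that the cube $Q_O$ is the same for both phases, because it depends only on $\omega$, and the constants in that lemma are frame-independent. On $\Omega$,
\[
 \mu u_1u_2=e^{-ix\cdot\xi}\mu(1+\psi_1)(1+\psi_2).
\]
Using $\chi=\theta=1$ near $\overline\Omega$ and $\mu=1+m$, I define
\[
 a_\xi=[\theta(1+m)(1+\psi_1)(1+\psi_2)]_0-\theta
   =\theta m+(1+m)\bigl([\theta\psi_1]_0+[\theta\psi_2]_0\bigr)+(1+m)[\theta\psi_1]_0[\theta\psi_2]_0 .
\]
Since $\theta=1$ near $\overline\Omega$, this gives $\mu u_1u_2=e^{-ix\cdot\xi}(1+a_\xi)$ a.e.\ on $\Omega$. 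The support of $a_\xi$ lies in $\operatorname{supp}\theta\subset B_{r_1}$. If a square-cutoff rewriting is needed in the product term, I use $\theta^2$ in place of $\theta$, which is harmless.

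\textbf{Bound on $a_\xi$.} By \eqref{eq:extensions}, $\|\theta m\|_{H^s}\leq C\varepsilon\leq C\delta_\tau$, because $\tau\geq1$ gives $\varepsilon\leq\varepsilon\tau$. By \eqref{eq:localized-correction}, $\|[\theta\psi_j]_0\|_{H^s}\leq C\delta_\tau$. Multiplication by $1+m\in W^{s,\infty}$ is bounded on $H^s$, and $H^s$ is an algebra. Together with $\delta_\tau\leq c_0\leq1$, these give $\|a_\xi\|_{H^s}\leq C_2\delta_\tau$ uniformly in $\xi\in B_\tau$.

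\textbf{Main obstacle: measurability.} The remaining point, and the one I expect to be hardest, is that $\xi\mapsto a_\xi$ can be chosen strongly measurable. A pointwise choice of $\omega$ and $\beta$ is not continuous in $\xi$, and $Q_O$ rotates with $O$. My plan is as follows.
\begin{enumerate}
\item Choose the frame $O(\xi)$ Borel measurably, piecewise continuously on finitely many Borel pieces of $B_\tau$, for instance by Gram--Schmidt applied to a fixed coordinate basis with a case split according to which basis vectors are nonparallel to $\xi$.
\item On each piece, transport to the fixed reference cube $(-L,L)^n$ by the rotation $y=O^{T}x$. This turns $G_\zeta$ into a fixed Fourier multiplier whose symbol depends continuously on the phase data, and turns the coefficients into rotated $C^s$ coefficients $E\circ O$ and $V_j\circ O$.
\item Show that these rotated coefficients act as multipliers depending continuously on $O$ in operator norm. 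This is where the genuinely $C^s$ extension of $E$ and $m$ is used, so that composition with a rotation is continuous into $W^{s,\infty}$-type multiplier norms. Then the Neumann series \eqref{eq:neumann-series} depends continuously on $(O,\xi)$.
\item Compose these continuous maps with the Borel selection $\xi\mapsto O(\xi)$. The resulting $a_\xi$ is Borel, and hence strongly measurable since $H^s$ is separable.
\end{enumerate}

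This argument is deferred to the appendix, consistent with the paper's plan.
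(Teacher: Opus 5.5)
Your proposal is correct and follows the paper's proof essentially step by step. It uses the same pair of phases built from two orthonormal vectors orthogonal to $\xi$ on one common rotated cube, the same cutoff-localized expansion of $\mu(1+\psi_1)(1+\psi_2)-1$ bounded by the $W^{s,\infty}$ multiplier and $H^s$ algebra estimates, and essentially the same measurability strategy (Borel Gram--Schmidt frame, transport to a fixed cube, continuity of the rotated coefficients, composition). There are two cosmetic slips: $b_j$ should read $-\xi/2\pm\gamma\beta$, and your displayed expansion is an identity only if the quadratic term carries $\theta^2$ (harmless, since $\theta=1$ on $\Omega$) or if, as the paper does, you localize with $\rho$, which equals one on $\opSevensupp\theta$.
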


\begin{proof}
For $\xi\ne0$, put $\nu=\xi/|\xi|$. Choose an oriented orthonormal frame $O=(\omega,\omega_2,\ldots,\omega_{n-1},\nu)\in SO(n)$. Such a frame exists, and its first two columns are orthogonal to $\nu$ because $n\geq3$. Appendix~\ref{app:measurability} specifies one Borel choice of the whole frame. Define
\[
 \begin{gathered}
 \beta(\xi)=\sqrt{\tau^2-|\xi|^2/4},\qquad
 b_1=-\xi/2+\beta\omega_2,\qquad b_2=-\xi/2-\beta\omega_2,\\
 \zeta_1=\tau\omega+ib_1,\qquad
 \zeta_2=-\tau\omega+ib_2.
 \end{gathered}
\]
Orthogonality gives $\omega\cdot b_j=0$ and $|b_j|^2=|\xi|^2/4+\beta^2=\tau^2$. Both phases satisfy \eqref{eq:single-phase} on the same cube, with opposite real parts, and satisfy \eqref{eq:paired-phases}. At $\xi=0$, use a fixed frame and $b_1=\tau\omega_2$, $b_2=-\tau\omega_2$. Lemma~\ref{lem:exact-solutions} gives the corrections with common constants.

Let $\widetilde\psi_j=[\rho\psi_j]_0$. By \eqref{eq:localized-correction},
$\opSevennorm{\widetilde\psi_j}_{H^s(\opSevenR^n)}\leq C\delta_\tau$. These localized functions belong to one fixed spatial Sobolev space even though the cubes vary with $\xi$. Set
\begin{equation}\label{eq:amplitude-definition}
 a_\xi=\theta\bigl[(1+m)(1+\widetilde\psi_1)
                                  (1+\widetilde\psi_2)-1\bigr].
\end{equation}
On $\opSevenOm$, the cutoffs are one and $1+m=\mu$, so the product identity follows. Expanding the amplitude gives
\[
 a_\xi=\theta m+\theta(1+m)\widetilde\psi_1
       +\theta(1+m)\widetilde\psi_2
       +\theta(1+m)\widetilde\psi_1\widetilde\psi_2.
\]
The compact supports, the $W^{s,\infty}$ multiplier estimate, and the $H^s$ algebra property imply
\[
 \begin{split}
 \opSevennorm{\theta m}_{H^s}&\leq C\opSeveneps,\\
 \opSevennorm{\theta(1+m)\widetilde\psi_j}_{H^s}
       &\leq C(1+\opSeveneps)\delta_\tau,\\
 \opSevennorm{\theta(1+m)\widetilde\psi_1\widetilde\psi_2}_{H^s}
       &\leq C(1+\opSeveneps)\opSevennorm{\widetilde\psi_1}_{H^s}
                           \opSevennorm{\widetilde\psi_2}_{H^s}
        \leq C(1+\opSeveneps)\delta_\tau^2.
 \end{split}
\]
Since $\opSeveneps\leq\delta_\tau\leq c_0\leq1$ and $\opSeveneps\leq1/2$, their sum is bounded by $C_2\delta_\tau$. The support lies in $\opSevensupp\theta\Subset B_{r_1}$. Appendix~\ref{app:measurability} gives a Borel choice of the frame in every dimension $n\geq3$ and proves strong measurability of \eqref{eq:neumann-series} and \eqref{eq:amplitude-definition}. No differentiability with respect to $\xi$ is used.
\end{proof}

\subsection{The Fourier error estimate}

The following estimate is used after the boundary identity. Its constant is independent of the frequency set. It uses only spatial $H^s$ control, without differentiation in the parameter $\xi$.

\begin{lemma}\label{lem:fourier-error}
Let $B\subset\opSevenR^n$ be measurable, and suppose $\xi\mapsto a_\xi\in H^s(\opSevenR^n)$ is strongly measurable with
\[
 \mathop{\operatorname{ess\,sup}}_{\xi\in B}
            \opSevennorm{a_\xi}_{H^s(\opSevenR^n)}\leq A<\infty.
\]
For $f\in L^2(\opSevenR^n)$, define
\begin{equation}\label{eq:fourier-error-operator}
 (T_af)(\xi)=(2\pi)^{-n/2}
            \int_{\opSevenR^n}e^{-ix\cdot\xi}a_\xi(x)f(x)\,dx,
 \qquad\xi\in B.
\end{equation}
Then
\begin{equation}\label{eq:fourier-error-bound}
 \opSevennorm{T_af}_{L^2(B)}\leq C_{n,s}A\opSevennorm f_{L^2(\opSevenR^n)},
 \qquad
 C_{n,s}=(2\pi)^{-n/2}
     \left(\pi^{n/2}\frac{\Gamma(s-n/2)}{\Gamma(s)}\right)^{1/2}.
\end{equation}
\end{lemma}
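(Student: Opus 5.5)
The estimate should follow from writing $T_af(\xi)$ as a convolution on the Fourier side, applying Cauchy--Schwarz with the Bessel weight $\opSevenang\eta^{\pm s}$, and integrating in $\xi$ with Fubini. The constant in \eqref{eq:fourier-error-bound} is exactly what the weighted $L^2$ integral of $\opSevenang\eta^{-2s}$ produces, and no structure of $B$ beyond measurability enters.

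\emph{Step 1 (well-definedness and measurability).} For a.e.\ $\xi\in B$ we have $a_\xi\in H^s\subset L^2\cap L^\infty$, so $a_\xi f\in L^1$ and the integral \eqref{eq:fourier-error-operator} converges absolutely. For measurability of $\xi\mapsto(T_af)(\xi)$, I would approximate $\xi\mapsto a_\xi$ by countably-valued simple functions in $H^s$. For each fixed $a\in H^s$ the map $\xi\mapsto\int e^{-ix\cdot\xi}a f\,dx$ is continuous by dominated convergence. Since $\opSevennorm{a_\xi-a}_{L^2}\leq\opSevennorm{a_\xi-a}_{H^s}$, the integrals converge pointwise a.e., so $T_af$ is measurable. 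This is the only slightly delicate point, and it is routine.

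\emph{Step 2 (Fourier representation).} Set $g_\xi(x)=e^{-ix\cdot\xi}f(x)\in L^2$. The unitary convention \eqref{eq:fourier-convention} gives $\widehat{g_\xi}(\zeta)=\widehat f(\zeta+\xi)$. Plancherel in the bilinear form $\int a g\,dx=\int\widehat a(\eta)\widehat g(-\eta)\,d\eta$ then yields
\[
 (T_af)(\xi)=(2\pi)^{-n/2}\int_{\opSevenR^n}\widehat{a_\xi}(\eta)\,\widehat f(\xi-\eta)\,d\eta .
\]

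\emph{Step 3 (Cauchy--Schwarz and Fubini).} Insert $\opSevenang\eta^{s}\opSevenang\eta^{-s}$ into the integrand and apply Cauchy--Schwarz:
\[
 |(T_af)(\xi)|^2\leq(2\pi)^{-n}\opSevennorm{a_\xi}_{H^s}^2\int\opSevenang\eta^{-2s}|\widehat f(\xi-\eta)|^2\,d\eta
 \leq(2\pi)^{-n}A^2\int\opSevenang\eta^{-2s}|\widehat f(\xi-\eta)|^2\,d\eta
\]
for a.e.\ $\xi\in B$. Enlarge the domain of $\xi$ from $B$ to $\opSevenR^n$ and apply Tonelli. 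The inner integral in $\xi$ equals $\opSevennorm{\widehat f}_{L^2}^2=\opSevennorm f_{L^2}^2$, which leaves the factor $\int(1+|\eta|^2)^{-s}\,d\eta$.

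\emph{Step 4 (the constant).} In polar coordinates, with the substitution $t=r^2$ and the Beta integral,
\[
 \int_{\opSevenR^n}(1+|\eta|^2)^{-s}\,d\eta=\pi^{n/2}\frac{\Gamma(s-n/2)}{\Gamma(s)},
\]
which is finite since $s>n/2$. Taking square roots gives \eqref{eq:fourier-error-bound} with exactly the stated $C_{n,s}$.

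The main point, rather than an obstacle, is that the $\xi$-dependence of $a_\xi$ is harmless. It enters only through the pointwise bound $\opSevennorm{a_\xi}_{H^s}\leq A$ applied before integrating in $\xi$. Consequently neither the size of $B$ nor any $\xi$-derivative of $a_\xi$ appears in the bound.
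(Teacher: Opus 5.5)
Your proposal is correct and follows the paper's proof essentially step for step: the Fourier-side convolution representation, the simple-function argument for measurability, weighted Cauchy--Schwarz with the Bessel weight, Tonelli after enlarging to $\opSevenR^n$, and the beta-integral evaluation of $\int(1+|z|^2)^{-s}\,dz$. The differences are cosmetic. You justify the product formula directly by the bilinear Parseval identity for the two $L^2$ functions $a_\xi$ and $e^{-ix\cdot\xi}f$, which is slightly cleaner than the paper's approximation argument, and you center the weight at $\eta$ rather than at $\xi-\eta$.
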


\begin{proof}
For fixed $\xi$, $a_\xi,f\in L^2(\opSevenR^n)$ imply $a_\xi f\in L^1(\opSevenR^n)$. The Fourier product formula gives
\begin{equation}\label{eq:fourier-error-convolution}
 (T_af)(\xi)=(2\pi)^{-n/2}
       \int_{\opSevenR^n}\widehat{a_\xi}(\xi-\eta)\widehat f(\eta)\,d\eta.
\end{equation}
The integral is absolutely convergent by Cauchy--Schwarz. To justify this identity at the output frequency $\xi$, approximate this fixed $a_\xi$ and $f$ by functions $a_m,f_m\in C_c^\infty(\opSevenR^n)$ in $L^2$. Then
\[
 \opSevennorm{a_mf_m-a_\xi f}_{L^1}
 \leq\opSevennorm{a_m-a_\xi}_{L^2}\opSevennorm{f_m}_{L^2}
      +\opSevennorm{a_\xi}_{L^2}\opSevennorm{f_m-f}_{L^2}\longrightarrow0.
\]
Plancherel and Cauchy--Schwarz give the same bound for the corresponding convolution differences, uniformly in the output frequency. Thus the product formula holds at every output frequency for the fixed $\xi$ and may be evaluated at $\xi$ itself.

We also verify measurability explicitly. Since $H^s(\opSevenR^n)$ is separable, strong measurability provides simple $H^s$-valued functions $a^{(m)}_\xi$ with $a^{(m)}_\xi\to a_\xi$ in $H^s$, hence in $L^2$, for almost every $\xi$. For fixed $m$, the function
\[
 \xi\longmapsto (2\pi)^{-n/2}\int e^{-ix\cdot\xi}a^{(m)}_\xi(x)f(x)\,dx
\]
is measurable: on each set where $a^{(m)}_\xi$ is constant it is the restriction of the continuous Fourier transform of an $L^1$ function. Moreover,
\[
 \left|\int e^{-ix\cdot\xi}
       (a^{(m)}_\xi-a_\xi)f\,dx\right|
 \leq\opSevennorm{a^{(m)}_\xi-a_\xi}_{L^2}\opSevennorm f_{L^2}\longrightarrow0
\]
for almost every $\xi$. Hence \eqref{eq:fourier-error-operator} is measurable as a pointwise almost-everywhere limit of measurable functions.

Weighted Cauchy--Schwarz in \eqref{eq:fourier-error-convolution} gives, for almost every $\xi\in B$,
\[
 \begin{split}
 |T_af(\xi)|^2
 &\leq(2\pi)^{-n}
  \left(\int_{\opSevenR^n}\opSevenang{\xi-\eta}^{2s}
                    |\widehat{a_\xi}(\xi-\eta)|^2\,d\eta\right)
  \left(\int_{\opSevenR^n}\opSevenang{\xi-\eta}^{-2s}
                    |\widehat f(\eta)|^2\,d\eta\right)\\
 &\leq(2\pi)^{-n}A^2
          \int_{\opSevenR^n}\opSevenang{\xi-\eta}^{-2s}|\widehat f(\eta)|^2\,d\eta.
 \end{split}
\]
Integrate over $\xi\in B$, apply Tonelli's theorem, and enlarge only the integral of the weight to all of $\opSevenR^n$:
\[
 \begin{split}
 \opSevennorm{T_af}_{L^2(B)}^2
 &\leq(2\pi)^{-n}A^2\int_{\opSevenR^n}|\widehat f(\eta)|^2
                  \left(\int_B\opSevenang{\xi-\eta}^{-2s}\,d\xi\right)d\eta\\
 &\leq(2\pi)^{-n}A^2
       \left(\int_{\opSevenR^n}(1+|z|^2)^{-s}\,dz\right)
                  \opSevennorm{\widehat f}_{L^2(\opSevenR^n)}^2.
 \end{split}
\]
By polar coordinates and the beta integral,
\[
 \begin{split}
 \int_{\opSevenR^n}(1+|z|^2)^{-s}\,dz
 &=\frac{2\pi^{n/2}}{\Gamma(n/2)}
                   \int_0^\infty r^{n-1}(1+r^2)^{-s}\,dr\\
 &=\frac{\pi^{n/2}}{\Gamma(n/2)}
                   \int_0^\infty t^{n/2-1}(1+t)^{-s}\,dt
  =\pi^{n/2}\frac{\Gamma(s-n/2)}{\Gamma(s)}.
 \end{split}
\]
This integral is finite precisely when $s>n/2$. Plancherel's theorem proves \eqref{eq:fourier-error-bound}. The estimate involves the integral of this fixed weight, not the volume of $B$.
\end{proof}

\section{The proof of Theorem~\ref{thm:near-euclidean}}\label{sec:proof-near}

We first record the boundary identity with the pairing convention needed for the paired solutions. We then choose one finite phase size for which both the solution construction and the Fourier comparison apply. Lemma~\ref{lem:fourier-error} controls the complete error, and the relative Fourier assumption forces the difference to vanish.

For $j=1,2$, let $u_j\in H^1(\opSevenOm)$ solve $L_{g,q_j}u_j=0$ with trace $f_j$. Since $s>n/2$, the potentials belong to $L^\infty(\opSevenOm)$; thus all volume terms below lie in $L^1$, and the standard weak Dirichlet form and normal trace are well defined. With complex bilinear boundary duality, Green's identity gives
\begin{equation}\label{eq:alessandrini}
 \bigl\langle(\Lambda_{g,q_1}-\Lambda_{g,q_2})f_1,f_2\bigr\rangle
 =\int_\opSevenOm(q_1-q_2)u_1u_2\,dV_g.
\end{equation}
To see this at the stated Sobolev regularity, let $w$ solve $L_{g,q_2}w=0$ with trace $f_1$. The weak definition of the normal trace gives
\[
 \begin{split}
 \langle\Lambda_{g,q_1}f_1,f_2\rangle
   &=\int_\opSevenOm\bigl(g^{ij}\partial_i u_1\partial_j u_2+q_1u_1u_2\bigr)\,dV_g,\\
 \langle\Lambda_{g,q_2}f_1,f_2\rangle
   &=\int_\opSevenOm\bigl(g^{ij}\partial_i w\partial_j u_2+q_2wu_2\bigr)\,dV_g.
 \end{split}
\]
Since $u_1-w\in H^1_0(\opSevenOm)$, the weak equation for $u_2$, tested against $u_1-w$, makes the second integral unchanged when $w$ is replaced by $u_1$. Subtracting proves \eqref{eq:alessandrini}. The bilinear convention is intentional: the integrand contains $u_1u_2$, not $u_1\overline{u_2}$.

\begin{proof}[Proof of Theorem~\ref{thm:near-euclidean}]
We first fix all constants that depend only on $\opSevenOm$ and $n$. Let $c_0$ be as in \eqref{eq:c-zero}, and let $C_2,C_{n,s}$ be as in Lemmas~\ref{lem:paired-solutions} and~\ref{lem:fourier-error}. Define
\[
 \begin{gathered}
 C_0=\max\{1,C_{n,s}C_2\},\qquad
 A_0=\max\{1,4/c_0,4C_0\},\\
 c_{\opSevenOm,n}=\min\{\opSeveneps_*,c_0/(4A_0),(4C_0A_0)^{-1}\}.
 \end{gathered}
\]
In particular,
\[
 A_0^{-1}\leq c_0/4,\qquad C_0/A_0\leq1/4,\qquad
 c_{\opSevenOm,n}A_0\leq c_0/4,\qquad C_0c_{\opSevenOm,n}A_0\leq1/4.
\]
For the given potentials and metric, choose
\begin{equation}\label{eq:finite-phase-size}
 \tau=\max\{R,A_0M/\kappa\}.
\end{equation}
Then $\tau\geq1$, $B_R\subset B_\tau$, and $M/\tau\leq\kappa/A_0$. From \eqref{eq:metric-smallness},
\[
 \opSeveneps R\leq c_{\opSevenOm,n}\kappa,\qquad
 A_0\opSeveneps M/\kappa\leq A_0c_{\opSevenOm,n}\kappa.
\]
Since $A_0\geq1$, the maximum of the left sides bounds $\opSeveneps\tau$ by $A_0c_{\opSevenOm,n}\kappa$. Consequently,
\begin{equation}\label{eq:simultaneous-smallness}
 \delta_\tau\leq\kappa(A_0c_{\opSevenOm,n}+A_0^{-1})\leq\kappa c_0/2,
 \qquad C_0\delta_\tau\leq\kappa/2.
\end{equation}
Also $\opSeveneps\leq c_{\opSevenOm,n}\leq\opSeveneps_*$. Thus Lemmas~\ref{lem:exact-solutions}--\ref{lem:fourier-error} apply at the same finite $\tau$.

Let $r=q_1-q_2$ and let $r_0$ be its zero extension. Since $\opSevenOm$ is bounded,
\[
 r_0\in L^1(\opSevenR^n)\cap L^2(\opSevenR^n),\qquad
 \opSevennorm{r_0}_{L^1(\opSevenR^n)}\leq|\opSevenOm|^{1/2}\opSevennorm{r_0}_{L^2(\opSevenR^n)}.
\]
Consequently its Fourier transform has the continuous representative given by \eqref{eq:fourier-convention}. No membership of $r_0$ in $H^s(\opSevenR^n)$ is needed; such membership could fail when $r$ does not vanish on $\partial\opSevenOm$. For each $\xi\in B_\tau$, use the exact pair from Lemma~\ref{lem:paired-solutions} and set
\[
 f_j(\xi)=u_j(\cdot,\xi)|_{\partial\opSevenOm}\in H^{1/2}(\partial\opSevenOm).
\]
Nonresonance identifies each constructed solution with the Dirichlet solution for its own trace. The two traces are not required to be equal. Equality of the full boundary maps, extended complex linearly, and \eqref{eq:alessandrini} imply
\[
 0=\int_\opSevenOm r(x)u_1(x,\xi)u_2(x,\xi)\,dV_g.
\]
Using \eqref{eq:paired-product}, we obtain
\[
 0=\int_\opSevenOm r(x)e^{-ix\cdot\xi}\,dx
       +\int_\opSevenOm r(x)e^{-ix\cdot\xi}a_\xi(x)\,dx.
\]
Thus, with the convention \eqref{eq:fourier-convention},
\[
 \widehat{r_0}(\xi)=-(2\pi)^{-n/2}
       \int_{\opSevenR^n}e^{-ix\cdot\xi}a_\xi(x)r_0(x)\,dx
       =-(T_ar_0)(\xi),\qquad \xi\in B_\tau.
\]
The density $\mu$ was included in $a_\xi$, so the leading term is the Fourier transform of $r_0$, not of $\mu r_0$. No derivative of the zero extension is used. Lemma~\ref{lem:fourier-error} and \eqref{eq:paired-product} give
\[
 \opSevennorm{\widehat{r_0}}_{L^2(B_\tau)}
 \leq C_{n,s}C_2\delta_\tau\opSevennorm{r_0}_{L^2(\opSevenR^n)}
 \leq C_0\delta_\tau\opSevennorm{r_0}_{L^2(\opSevenR^n)}.
\]
Combining this estimate with \eqref{eq:relative-fourier}, $B_R\subset B_\tau$, and \eqref{eq:simultaneous-smallness},
\[
 \begin{split}
 \kappa\opSevennorm{r_0}_{L^2(\opSevenR^n)}
 &\leq\opSevennorm{\widehat{r_0}}_{L^2(B_R)}
 \leq\opSevennorm{\widehat{r_0}}_{L^2(B_\tau)}\\
 &\leq C_0\delta_\tau\opSevennorm{r_0}_{L^2(\opSevenR^n)}
 \leq\frac\kappa2\opSevennorm{r_0}_{L^2(\opSevenR^n)}.
 \end{split}
\]
Since $\kappa>0$, $r_0=0$ in $L^2(\opSevenR^n)$, and hence $q_1=q_2$ almost everywhere in $\opSevenOm$. This proof uses \eqref{eq:relative-fourier} as a hypothesis and does not take $\tau\to\infty$ at a fixed non-Euclidean metric.
\end{proof}

\section{Auxiliary results for Theorem~\ref{thm:product}}\label{sec:aux-product}

We record boundary determination and the exact separation of transverse modes. We then show that the distinct transverse eigenvalues supply sufficiently many sampling points, and that the corresponding negative-energy maps determine the full meromorphic boundary family. At the end of the section we specify the established inverse boundary spectral theorem used in Section~\ref{sec:proof-product}.

\subsection{Boundary values and separated modes}\label{subsec:boundary-modes}

Throughout this section, $X,Y,g_X,h$ are as in Theorem~\ref{thm:product}. For smooth real $a$ on $X$ and $b$ on $Y$, set
\[
 \begin{gathered}
 A_a=-\Delta_{g_X}+a,\qquad
 A_{a,D}:H^2(X)\cap H^1_0(X)\longrightarrow L^2(X,dV_{g_X}),\\
 B_b=-\Delta_h+b.
 \end{gathered}
\]
Let $\Lambda_{X,a}(z)$ be the boundary map for $(A_a-z)v=0$. When $\dim X=1$, the connected manifold $X$ is an interval in arclength coordinates, and the boundary spaces below are the finite-dimensional spaces of values at its two endpoints. The trace and normal derivative formulas are understood accordingly.

For each $z\notin\opSevenSpecD A_a$, elliptic Fredholm theory and a bounded trace lifting give
\[
 \opSevennorm v_{H^1(X)}\leq C_{a,z}\opSevennorm f_{H^{1/2}(\partial X)}
\]
for the weak solution with boundary value $f$; see \cite{McLean,KKL}. To specify the boundary topology used below, let $\eta\in H^{1/2}(\partial X)$ and let $H\in H^1(X)$ be a bounded lift of $\eta$. With bilinear duality,
\[
 \langle\Lambda_{X,a}(z)f,\eta\rangle
 =\int_X\bigl(\langle\nabla v,\nabla H\rangle_{g_X}
                    +(a-z)vH\bigr)\,dV_{g_X}.
\]
The gradient pairing in this formula is complex bilinear. The value is independent of the lift: the difference of two lifts lies in $H^1_0(X)$ and can be tested in the weak equation for $v$. Cauchy--Schwarz and the preceding solution estimate show that
\[
 \opSevennorm{\Lambda_{X,a}(z)f}_{H^{-1/2}(\partial X)}
 \leq C_{a,z}\opSevennorm f_{H^{1/2}(\partial X)}.
\]
If $f\in H^{3/2}(\partial X)$, elliptic regularity gives $v\in H^2(X)$ and the normal derivative belongs to $H^{1/2}(\partial X)$. Green's formula identifies this derivative with the same weak boundary functional. Thus the family in Lemma~\ref{lem:meromorphic} agrees with the weak $H^{1/2}\to H^{-1/2}$ boundary map on $H^{3/2}$ inputs, with its output viewed in $H^{1/2}$. This compatibility justifies the later use of equality at sampled energies; it assumes no additional measurements. The constants here are for a fixed non-spectral $z$; the uniform half-plane bound is proved separately in Appendix~\ref{app:resolvent}.

Boundary determination for continuous potentials on a known smooth metric \cite[Appendix~A, Theorem~A.1]{FKOU} gives
\begin{equation}\label{eq:boundary-determination}
 \Lambda_{\opSevenOm,q_1}(0)=\Lambda_{\opSevenOm,q_2}(0)
 \quad\Longrightarrow\quad
 q_1|_{\partial X\times Y}=q_2|_{\partial X\times Y}.
\end{equation}
The cited appendix result holds on every smooth compact Riemannian manifold of dimension at least two, without a CTA assumption; here $\dim X+\dim Y\geq2$. More explicitly, for a fixed $p\in\partial\opSevenOm$ that result supplies smooth boundary functions $f_{\delta,p}$, constructed from the known metric and $p$, such that, with bilinear duality,
\[
 q_j(p)=2\lim_{\delta\downarrow0}
 \bigl\langle[\Lambda_{\opSevenOm,q_j}(0)-\Lambda_{\opSevenOm,0}(0)]
          f_{\delta,p},\overline{f_{\delta,p}}\bigr\rangle.
\]
The reference Dirichlet Laplacian is invertible at zero, and the same reference map and the same test functions are used for $j=1,2$. Subtraction proves \eqref{eq:boundary-determination}. Smoothness of $q_j$ makes these pointwise boundary values well defined. We use only this boundary-trace conclusion, not the interior reconstruction theorem of \cite{FKOU}.

If $B_b\phi=\mu\phi$ with $\phi\ne0$, and
$0\notin\opSevenSpecD(-\Delta_g+a+b)$ on $\opSevenOm$, then
\begin{equation}\label{eq:mode-identity}
 \begin{gathered}
 -\mu\notin\opSevenSpecD A_a,\\
 \Lambda_{\opSevenOm,a+b}(0)(f\otimes\phi)
      =\bigl(\Lambda_{X,a}(-\mu)f\bigr)\otimes\phi,
 \qquad f\in C^\infty(\partial X).
 \end{gathered}
\end{equation}
Here $(f\otimes\phi)(x,y)=f(x)\phi(y)$. Indeed, the product metric gives
\begin{equation}\label{eq:tensor-sum}
 -\Delta_g+a(x)+b(y)=A_a\otimes\opSevenId+\opSevenId\otimes B_b.
\end{equation}
If $A_av=-\mu v$ for a nonzero Dirichlet eigenfunction $v$, then $v\otimes\phi$ would be a nonzero zero-energy Dirichlet eigenfunction on $\opSevenOm$, a contradiction. The solution of $(A_a+\mu)v=0$ with trace $f$ therefore exists uniquely, and $u=v\otimes\phi$ is the product solution. Since the product normal is $(\nu_X,0)$,
$\partial_{\nu_g}u=(\partial_{\nu_X}v)\phi$. This proves \eqref{eq:mode-identity}, including the negative sign of the sampled base energy.

\subsection{The distinct transverse eigenvalues}

The operator $B_b$ is self-adjoint with discrete spectrum and smooth eigenfunctions. It has only finitely many nonpositive eigenvalues. The next statement concerns distinct eigenvalues, not a list in which eigenvalues are repeated according to multiplicity.

\begin{lemma}\label{lem:distinct-eigenvalues}
Let $0<\nu_1<\nu_2<\cdots$ be the distinct positive eigenvalues of $B_b$. Then $\nu_k\leq Ck^2$ for all sufficiently large $k$, and
\begin{equation}\label{eq:divergent-samples}
 \sum_{k=1}^\infty\nu_k^{-1/2}=\infty.
\end{equation}
The constant may depend on $(Y,h,b)$.
\end{lemma}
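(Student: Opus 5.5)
The plan is to use the Weyl law with a sharp remainder for $B_b$ on the closed manifold $(Y,h)$, with $d=\dim Y$. Let $N(\lambda)$ be the number of eigenvalues of $B_b$ in $(-\infty,\lambda]$, counted with multiplicity. Since $b$ is smooth and bounded, $B_b$ is a first-order-in-$\lambda$ perturbation of $-\Delta_h$. The Avakumovi\'c--Levitan--H\"ormander theorem for $-\Delta_h+b$ then gives
\[
 N(\lambda)=c_Y\lambda^{d/2}+O(\lambda^{(d-1)/2}),\qquad \lambda\to\infty,
\]
with $c_Y=(2\pi)^{-d}\omega_d\opSevenVol(Y)>0$. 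Alternatively, one can note that $-\Delta_h-\opSevennorm b_\infty\leq B_b\leq-\Delta_h+\opSevennorm b_\infty$, so min-max shifts the counting function by at most a constant in $\lambda$. Any shift by $O(1)$ is absorbed in the $O(\lambda^{(d-1)/2})$ remainder, so it suffices to have the Weyl remainder for $-\Delta_h$ itself.

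The remainder bounds multiplicities. A single eigenvalue $\nu$ has multiplicity $N(\nu)-N(\nu^-)$, and the jump of the main term across a point is zero. The jump of $N$ at $\nu$ is therefore at most twice the remainder bound, that is, $\leq C\nu^{(d-1)/2}$. Now count the distinct eigenvalues in the window $(\lambda,2\lambda]$. This window contains $N(2\lambda)-N(\lambda)\geq c\lambda^{d/2}$ eigenvalues with multiplicity, for large $\lambda$. Each distinct value carries multiplicity at most $C(2\lambda)^{(d-1)/2}$. Hence the number $D(\lambda)$ of distinct positive eigenvalues up to $2\lambda$ satisfies $D(\lambda)\geq c'\lambda^{1/2}$ for large $\lambda$.

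Inverting this gives the first claim. Given large $k$, pick $\lambda$ with $c'\lambda^{1/2}=k$. Then at least $k$ distinct positive eigenvalues lie below $2\lambda=2k^2/c'^2$, so $\nu_k\leq Ck^2$. The finitely many nonpositive eigenvalues only shift indices by a constant, which is absorbed for large $k$. Consequently $\nu_k^{-1/2}\geq C^{-1/2}k^{-1}$ for all large $k$, and \eqref{eq:divergent-samples} follows from divergence of the harmonic series.

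The only substantive step is the remainder estimate with exponent $(d-1)/2$. It holds on any smooth compact boundaryless manifold, with no assumption on the geodesic flow, which is exactly what prevents the multiplicities from outgrowing the eigenvalue count. I would cite H\"ormander's theorem for $-\Delta_h$ and transfer it to $B_b$ by the bracketing argument above. Since $d\geq1$ is arbitrary, this requires no simplicity of the spectrum. In $d=1$ the argument is trivially consistent: eigenvalues have multiplicity at most $2$, and $N(\lambda)\sim c\lambda^{1/2}$.
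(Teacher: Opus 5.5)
Your proposal is correct and follows essentially the same route as the paper. Both arguments take Weyl's law with the $O(t^{(d-1)/2})$ remainder for $-\Delta_h$, transfer it to $B_b$ via the bracketing $-\Delta_h-\lVert b\rVert_{L^\infty}\leq B_b\leq-\Delta_h+\lVert b\rVert_{L^\infty}$, bound multiplicities by the jump of the counting function, deduce that the number of distinct eigenvalues up to $t$ is $\gtrsim\sqrt t$, and finish by inversion and the harmonic series. The only cosmetic difference is that you count distinct eigenvalues in the window $(\lambda,2\lambda]$, which sidesteps the paper's subtraction of the nonpositive eigenvalues and its enlargement of the multiplicity constant to cover small eigenvalues.
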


\begin{proof}
Let $N_b(t)$ count the eigenvalues of $B_b$ not exceeding $t$, including multiplicities, and let $D_b(t)$ count only the distinct positive eigenvalues. Write $N_h(t)$ for the counting function of $-\Delta_h$, with multiplicities, and set $\beta_b=\opSevennorm b_{L^\infty(Y)}$. The quadratic-form inequalities
\[
 -\Delta_h-\beta_b\leq B_b\leq-\Delta_h+\beta_b
\]
and the min--max principle imply
\[
 N_h(t-\beta_b)\leq N_b(t)\leq N_h(t+\beta_b)
 \qquad(t>\beta_b+1).
\]
The closed-manifold Weyl remainder estimate \cite{Hormander,Sogge} gives
$N_h(t)=c_{Y,h}t^{d/2}+O(t^{(d-1)/2})$, where $c_{Y,h}>0$.
For either sign,
$(t\pm\beta_b)^{d/2}-t^{d/2}=O(t^{d/2-1})$ as $t\to\infty$.
Since $d/2-1\leq(d-1)/2$, the preceding sandwich proves
\[
 N_b(t)=c_{Y,h}t^{d/2}+O\bigl(t^{(d-1)/2}\bigr)
 \quad(t\to\infty).
\]
This also proves the required remainder, not just its leading term, for $d=1$, when the remainder is $O(1)$.

The multiplicity $m_b(\mu)$ of a large positive eigenvalue $\mu$ is the jump
\[
 m_b(\mu)=N_b(\mu)-N_b(\mu-0),\qquad
 N_b(\mu-0)=\lim_{t\uparrow\mu}N_b(t).
\]
Write $N_b(t)=c_{Y,h}t^{d/2}+R(t)$, where $|R(t)|\leq C t^{(d-1)/2}$ for all sufficiently large $t$. For $0<r<\mu$,
\[
 N_b(\mu)-N_b(\mu-r)
 =c_{Y,h}\bigl(\mu^{d/2}-(\mu-r)^{d/2}\bigr)
   +R(\mu)-R(\mu-r).
\]
Letting $r\downarrow0$ through values for which $\mu-r$ is not an eigenvalue, the first term tends to zero and the left side tends to $m_b(\mu)$. The two remainder terms are bounded by $C\mu^{(d-1)/2}$ after increasing the constant. Hence
\[
 m_b(\mu)\leq C(1+\mu)^{(d-1)/2}.
\]
Increasing $C$ once more includes the finitely many smaller positive eigenvalues. Thus the multiplicities are uniformly bounded when $d=1$.

Let $N_0$ be the number of nonpositive eigenvalues counted with multiplicity. The leading Weyl asymptotic implies, for all sufficiently large $t$,
$N_b(t)-N_0\geq c_1t^{d/2}$. On the other hand, every positive eigenvalue $\nu_k\leq t$ has multiplicity at most $C(1+t)^{(d-1)/2}$. Therefore
\[
 \begin{split}
 c_1t^{d/2}
 &\leq N_b(t)-N_0
   =\sum_{0<\nu_k\leq t}m_b(\nu_k)\\
 &\leq C D_b(t)(1+t)^{(d-1)/2}.
 \end{split}
\]
For all sufficiently large $t$, with $t\geq1$, this yields $D_b(t)\geq c_2\sqrt t$ after changing the constant. At $t=\nu_k$, the definition of the strictly increasing sequence gives $D_b(\nu_k)=k$, and hence
\[
 k\geq c_2\sqrt{\nu_k},\qquad
 \nu_k\leq c_2^{-2}k^2,\qquad
 \nu_k^{-1/2}\geq c_2/k
\]
for large $k$. The harmonic series proves \eqref{eq:divergent-samples}. Multiplicities were used only in $N_b$ and $m_b$; the sampling sequence consists of distinct points.
\end{proof}

\subsection{From negative-energy samples to the boundary family}\label{subsec:boundary-family}

We use complex sesquilinear pairings, linear in the first entry:
\[
 (u,v)_X=\int_Xu\overline v\,dV_{g_X},\qquad
 (f,h)_{\partial X}=\int_{\partial X}f\overline h\,dS_{g_X}.
\]
For distributions, the boundary expression means duality against $\overline h$. The boundary operators are unchanged; only the pairing convention differs from the bilinear identity \eqref{eq:alessandrini}. For a Dirichlet eigenvalue $\lambda$ of $A_a$ and an orthonormal basis $\{\varphi_{\lambda,r}\}_{r=1}^{m_\lambda}$ of its eigenspace, define
\begin{equation}\label{eq:residue-operator}
 \psi_{\lambda,r}=\partial_{\nu_X}\varphi_{\lambda,r}|_{\partial X},
 \qquad
 K_\lambda f=\sum_{r=1}^{m_\lambda}
                (f,\psi_{\lambda,r})_{\partial X}\psi_{\lambda,r}.
\end{equation}
The operator $K_\lambda$ is independent of the orthonormal basis. A first-order pole need not correspond to a simple eigenvalue: its residue can have higher rank.

\begin{lemma}\label{lem:meromorphic}
For real $a\in C^\infty(X)$, the family $z\mapsto\Lambda_{X,a}(z)$ is meromorphic as an operator $H^{3/2}(\partial X)\to H^{1/2}(\partial X)$. Every Dirichlet eigenvalue is a nonremovable pole of order one, and
\begin{equation}\label{eq:positive-residue}
 \opSevenRes_{z=\lambda}\Lambda_{X,a}(z)=K_\lambda.
\end{equation}
There are $\sigma>1$ and $C_a>0$, depending on $X,g_X,a$, such that $w\mapsto\Lambda_{X,a}(-w^2)$ is holomorphic on
$\Pi_\sigma=\{w\in\opSevenC:\operatorname{Re}w>\sigma\}$ and
\begin{equation}\label{eq:halfplane-growth}
 \opSevennorm{\Lambda_{X,a}(-w^2)}_{H^{3/2}(\partial X)\to H^{1/2}(\partial X)}
 \leq C_a(1+|w|)^3,\qquad w\in\Pi_\sigma.
\end{equation}
For fixed $f,h\in C^\infty(\partial X)$, the scalar function
$(\Lambda_{X,a}(-w^2)f,h)_{\partial X}$ is holomorphic and has the same polynomial growth, with a constant also depending on $f,h$.
\end{lemma}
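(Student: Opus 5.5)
The plan is to write the boundary map through the Dirichlet resolvent and a fixed harmonic lift, and read off the poles and the residues from the spectral expansion. Fix $f\in H^{3/2}(\partial X)$ and let $Ef\in H^2(X)$ be a bounded right inverse of the trace, independent of $z$. The solution of $(A_a-z)v=0$ with $v|_{\partial X}=f$ is
\[
 v=Ef-(A_{a,D}-z)^{-1}(A_a-z)Ef .
\]
Hence
\[
 \Lambda_{X,a}(z)f=\partial_{\nu_X}Ef-\partial_{\nu_X}(A_{a,D}-z)^{-1}(A_a-z)Ef .
\]
The resolvent $(A_{a,D}-z)^{-1}:L^2\to H^2\cap H^1_0$ is meromorphic with simple poles at the Dirichlet eigenvalues, since $A_{a,D}$ is self-adjoint with compact resolvent. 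Its residue at $\lambda$ is $-\Pi_\lambda$, where $\Pi_\lambda$ is the orthogonal projection onto the eigenspace. The normal trace $H^2\to H^{1/2}$ is bounded and $(A_a-z)Ef$ is affine in $z$. Therefore $\Lambda_{X,a}$ is meromorphic $H^{3/2}\to H^{1/2}$ with at most simple poles. Its residue at $\lambda$ is
\[
 \partial_{\nu_X}\Pi_\lambda(A_a-\lambda)Ef=\sum_r\bigl((A_a-\lambda)Ef,\varphi_{\lambda,r}\bigr)_X\,\psi_{\lambda,r}.
\]

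Next I would apply Green's formula. Since $\varphi_{\lambda,r}$ is real-valued (it can be chosen so, because $a$ is real), vanishes on $\partial X$ and satisfies $(A_a-\lambda)\varphi_{\lambda,r}=0$, one gets
\[
 \bigl((A_a-\lambda)Ef,\varphi_{\lambda,r}\bigr)_X=(f,\psi_{\lambda,r})_{\partial X}
\]
with the outward normal; the sign is to be checked. This gives exactly $K_\lambda$ in \eqref{eq:residue-operator}, and $K_\lambda$ is basis-independent as a sum over an orthonormal basis of the eigenspace.

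For nonremovability I need $K_\lambda\neq0$. For this I would invoke unique continuation: if $\psi_{\lambda,r}=0$ for every $r$, then a nonzero eigenfunction has vanishing Cauchy data on $\partial X$ and so vanishes identically, which is a contradiction. In particular $(K_\lambda\psi_{\lambda,1},\psi_{\lambda,1})_{\partial X}\geq\|\psi_{\lambda,1}\|^4>0$. When $\dim X=1$ the same computation works with endpoint values, and uniqueness for the ODE replaces unique continuation.

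For the half-plane, note that $\operatorname{Spec}_D A_a\subset[-\|a\|_\infty,\infty)$. Choose $\sigma>1$ with $\sigma^2>\|a\|_\infty+1$. For $\operatorname{Re}w>\sigma$ write $w=x+iy$, so that $-w^2=y^2-x^2-2ixy$. If $|y|\leq x/2$, then $\operatorname{Re}(-w^2)\leq-\tfrac34x^2<-\|a\|_\infty-1$. Otherwise $|\operatorname{Im}(-w^2)|=2x|y|\geq\sigma^2$. In both cases $\operatorname{dist}(-w^2,\operatorname{Spec}_D)\geq1$, so $\|(A_{a,D}+w^2)^{-1}\|_{L^2\to L^2}\leq1$. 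Elliptic regularity gives
\[
 \|u\|_{H^2}\leq C\bigl(\|A_au\|_{L^2}+\|u\|_{L^2}\bigr),
\]
hence $\|(A_{a,D}+w^2)^{-1}\|_{L^2\to H^2}\leq C(1+|w|^2)$. Combining this with $\|(A_a+w^2)Ef\|_{L^2}\leq C(1+|w|^2)\|f\|_{H^{3/2}}$ would crudely give $(1+|w|)^4$.

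The main obstacle is getting the exponent $3$ rather than $4$. To remove one power I would split the resolvent factor as
\[
 (A_{a,D}+w^2)^{-1}(A_a+w^2)Ef=Ef-(A_{a,D}+w^2)^{-1}\bigl((A_a+w^2)Ef-(A_a+w^2)Ef\bigr),
\]
or more usefully as $Ef-(A_{a,D}+w^2)^{-1}$ applied to a $w$-independent term once the $w^2Ef$ part is absorbed. Writing $(A_a+w^2)Ef=(A_{a,D}+w^2)Ef+\ldots$ is not valid because $Ef\notin H^1_0$. Instead I would interpolate the resolvent bounds as maps $H^{-1}\to H^1$ and $L^2\to H^2$ to obtain $\|v\|_{H^2}\leq C(1+|w|)^3\|f\|_{H^{3/2}}$ directly from the boundary-value problem. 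Holomorphy of the scalar pairings and their polynomial growth then follow by pairing with fixed smooth $f,h$ and applying the bounded trace estimates.
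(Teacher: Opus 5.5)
Your treatment of meromorphy, the residue and nonremovability follows the paper's proof. You use the same lift-plus-Dirichlet-resolvent formula and the same Green identity $((A_a-\lambda)Ef,\varphi_{\lambda,r})_X=(f,\psi_{\lambda,r})_{\partial X}$; the sign is correct for the outward normal. You also use unique continuation from vanishing Cauchy data.

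The gap is in the half-plane bound: the exponent $3$ claimed in \eqref{eq:halfplane-growth} is never obtained. You only show $\operatorname{dist}(-w^2,\operatorname{Spec}_D A_a)\geq1$, i.e.\ $\|(A_{a,D}+w^2)^{-1}\|_{L^2\to L^2}\leq1$, and this leads only to $(1+|w|)^4$. Neither proposed repair works. The displayed splitting is not an identity: its right-hand side collapses to $Ef$, whereas the left-hand side lies in $H^1_0(X)$. Interpolating between $H^{-1}\to H^1$ and $L^2\to H^2$ bounds yields only $H^{-1+\theta}\to H^{1+\theta}$ bounds. At the $H^2$ endpoint it therefore cannot improve on the $L^2\to H^2$ bound you already have.

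What is missing is that the distance from $-w^2$ to the spectrum grows linearly in $|w|$ on $\Pi_\sigma$. Your own case split shows this once it is quantified. Take real $\lambda\geq-\|a\|_\infty$ and $w=x+iy$ with $x>\sigma$.
If $|y|>x/2$, then $|\lambda+w^2|\geq 2x|y|\geq 2\sigma|y|\geq\tfrac{2\sigma}{3}|w|$, since $|w|\leq x+|y|<3|y|$.
If $|y|\leq x/2$, then $|\lambda+w^2|\geq\tfrac34x^2-\|a\|_\infty\geq\tfrac35|w|^2-\|a\|_\infty$.
For $\sigma$ large enough, both cases give $|\lambda+w^2|\geq c(1+|w|)$. (Your condition $\sigma^2>\|a\|_\infty+1$ does not even give $\tfrac34x^2>\|a\|_\infty+1$, so $\sigma$ must be enlarged anyway.) Hence $\|(A_{a,D}+w^2)^{-1}\|_{L^2\to L^2}\leq C(1+|w|)^{-1}$.

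Now set $F_w=(A_a+w^2)Ef$ and $v_w=(A_{a,D}+w^2)^{-1}F_w$. You get $\|v_w\|_{L^2}\leq C(1+|w|)\|f\|_{H^{3/2}}$. From $A_{a,D}v_w=F_w-w^2v_w$ you get $\|A_{a,D}v_w\|_{L^2}\leq C(1+|w|)^3\|f\|_{H^{3/2}}$. The graph estimate and the normal trace then give \eqref{eq:halfplane-growth}; this is exactly how the paper closes the argument.

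For the later use in Lemma~\ref{lem:sample-uniqueness}, any polynomial bound would suffice after increasing the power of $(w+\alpha)$. So your exponent $4$ would not damage the main theorem, but it does not prove the lemma as stated.
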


The meromorphic resolvent and elliptic trace theory are standard \cite{McLean,KKL}. Appendix~\ref{app:resolvent} provides the calculations relevant here: the sign and nonvanishing of the residues, and the polynomial bound on a whole half-plane, rather than only on the negative real-energy axis.

\begin{lemma}\label{lem:sample-uniqueness}
Let $a_1,a_2\in C^\infty(X;\opSevenR)$, and let
$0<t_1<t_2<\cdots\to\infty$ satisfy
\[
 \sum_{k=1}^\infty t_k^{-1/2}=\infty.
\]
Suppose that for every sufficiently large $k$, $-t_k$ lies outside both Dirichlet spectra and
\[
 \Lambda_{X,a_1}(-t_k)=\Lambda_{X,a_2}(-t_k)
\]
as maps $H^{1/2}(\partial X)\to H^{-1/2}(\partial X)$. Then the full meromorphic boundary families coincide as maps $H^{3/2}(\partial X)\to H^{1/2}(\partial X)$. Their Dirichlet eigenvalues, including multiplicities, and the operators $K_\lambda$ in \eqref{eq:residue-operator} also coincide.
\end{lemma}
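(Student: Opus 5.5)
We would prove Lemma~\ref{lem:sample-uniqueness} by reducing to a scalar uniqueness theorem for bounded holomorphic functions on a half-plane. Fix $f,h\in C^\infty(\partial X)$ and set
\[
 F(w)=\bigl((\Lambda_{X,a_1}(-w^2)-\Lambda_{X,a_2}(-w^2))f,h\bigr)_{\partial X},
\]
which by Lemma~\ref{lem:meromorphic} is holomorphic on $\Pi_\sigma$, where we take $\sigma$ to be the larger of the two values for $a_1,a_2$. It satisfies $|F(w)|\leq C_{f,h}(1+|w|)^3$ there. For smooth $f$, the hypothesis on $H^{1/2}\to H^{-1/2}$ maps gives equality of the $H^{3/2}\to H^{1/2}$ families at $z=-t_k$. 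This uses the compatibility paragraph preceding Lemma~\ref{lem:meromorphic}, where the weak and strong maps agree on $H^{3/2}$ inputs. Hence $F(w_k)=0$ for $w_k=\sqrt{t_k}$ and all large $k$, with $w_k\in\Pi_\sigma$ once $t_k>\sigma^2$.

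To remove the polynomial growth we divide: $G(w)=F(w)/(w+1)^4$ is bounded and holomorphic on $\Pi_\sigma$. We then transport to the right half-plane by $w\mapsto w-\sigma$, obtaining a bounded holomorphic function on $\{\operatorname{Re}w>0\}$ vanishing at $x_k=w_k-\sigma>0$. The classical Blaschke condition for the right half-plane states that zeros $z_k$ of a nonzero bounded holomorphic function satisfy $\sum \operatorname{Re}z_k/(1+|z_k|^2)<\infty$. Equivalently, one maps to the disc by $\zeta=(w-1)/(w+1)$ and uses $\sum(1-|\zeta_k|)<\infty$. For real $x_k\to\infty$ the summands are comparable to $1/x_k\sim t_k^{-1/2}$, and this sum diverges by hypothesis. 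Hence $G\equiv0$, so $F\equiv0$ on $\Pi_\sigma$. By meromorphy in $z$, the scalar functions then agree on $\opSevenC$ minus the union of both discrete Dirichlet spectra, since this set is connected and contains $\{-w^2:w\in\Pi_\sigma\}$, which has interior points. Since $f,h$ range over a dense set and the families are bounded operators off the poles, the operator families coincide as maps $H^{3/2}\to H^{1/2}$.

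For the spectral consequences we use \eqref{eq:positive-residue}. Every Dirichlet eigenvalue of $A_{a_j}$ is a nonremovable pole with residue $K_\lambda$, and there are no other poles. Equality of the meromorphic families therefore gives equality of the pole sets, hence of the distinct Dirichlet eigenvalues, and equality of the residues $K_\lambda^{(1)}=K_\lambda^{(2)}$. For multiplicities we note that $m_\lambda=\opSevenrank K_\lambda$. Indeed, the normal derivatives $\psi_{\lambda,r}$ of an orthonormal eigenbasis are linearly independent by unique continuation: a nonzero combination with vanishing Cauchy data would be an eigenfunction vanishing identically. Hence $K_\lambda$, which is a positive Gram-type operator built on these independent vectors, has rank exactly $m_\lambda$.

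The main obstacle is the growth control needed for the Blaschke argument. It rests on the half-plane bound \eqref{eq:halfplane-growth}, which we take from Lemma~\ref{lem:meromorphic}. With that bound in hand, the remaining points to check carefully are the constant bookkeeping in the Blaschke sum after the shift by $\sigma$, and the passage from the weak-sense equality to equality of the strong family.
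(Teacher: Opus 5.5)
Your proposal is correct and follows essentially the same route as the paper. You divide the scalar pairing by a fourth-power polynomial to get a bounded holomorphic function on $\Pi_\sigma$, and use the Blaschke condition at the zeros $\sqrt{t_k}$ to force it to vanish. You then continue by the identity theorem on the connected complement of the two spectra, and read off eigenvalues, residues and multiplicities from Lemma~\ref{lem:meromorphic} together with $m_\lambda=\operatorname{rank}K_\lambda$. The only differences are cosmetic: you divide by $(w+1)^4$ (valid since $\operatorname{Re}w>\sigma>1$) where the paper uses $(w+\alpha)^4$, and you shift to the right half-plane where the paper maps $\Pi_\sigma$ directly to the disc.
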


\begin{proof}
Choose a common half-plane $\Pi_\sigma$ on which Lemma~\ref{lem:meromorphic} applies to both potentials. For fixed smooth $f,h$ on $\partial X$, define
\[
 F_{f,h}(w)=
 \bigl([\Lambda_{X,a_1}(-w^2)-\Lambda_{X,a_2}(-w^2)]f,h\bigr)_{\partial X}.
\]
The test function $h$ is fixed, so its complex conjugation does not affect holomorphy. Fix a real $\alpha>\sigma+1$. By \eqref{eq:halfplane-growth},
\[
 G_{f,h}(w)=\frac{F_{f,h}(w)}{(w+\alpha)^4}
\]
is bounded and holomorphic on $\Pi_\sigma$. Indeed, $\alpha>1$ and $\operatorname{Re}w>\sigma>1$ imply
\[
 |w+\alpha|^2=|w|^2+2\alpha\operatorname{Re}w+\alpha^2
 \geq1+|w|^2\geq\tfrac12(1+|w|)^2.
\]
Thus $|G_{f,h}(w)|\leq4C_{f,h}(1+|w|)^{-1}$ throughout the half-plane. The denominator has no zero there.

The assumed equality at $-t_k$ holds in the natural $H^{1/2}\to H^{-1/2}$ topology, so in particular it gives $F_{f,h}(w_k)=0$ for every smooth $f,h$ and $w_k=\sqrt{t_k}$. After deleting finitely many terms, the distinct real numbers $w_k$ lie in $\Pi_\sigma$ and are zeros of $G_{f,h}$. The conformal map
\[
 w\longmapsto\frac{w-\sigma-1}{w-\sigma+1}
\]
sends $\Pi_\sigma$ onto the unit disk. For sufficiently large $k$, the images $z_k$ of these zeros satisfy
\[
 1-|z_k|=\frac2{w_k-\sigma+1}.
\]
Because $w_k\to\infty$, for all sufficiently large $k$ the denominator $w_k-\sigma+1$ is comparable with $w_k$; for example, after increasing the starting index,
\[
 \frac1{w_k-\sigma+1}\geq\frac1{2w_k}.
\]
Thus the hypothesis $\sum_k t_k^{-1/2}=\sum_k w_k^{-1}=\infty$ implies $\sum_k(1-|z_k|)=\infty$. By the Blaschke condition for the zeros of a nonzero bounded holomorphic function \cite[Chapter~II]{Garnett}, $G_{f,h}$ vanishes identically, and hence so does $F_{f,h}$.

For each smooth $f$, the difference of the boundary outputs is in $H^{1/2}(\partial X)$ and pairs to zero with every smooth $h$. It is therefore zero. Density in $H^{3/2}(\partial X)$ and boundedness of the maps imply
\[
 \Lambda_{X,a_1}(-w^2)=\Lambda_{X,a_2}(-w^2),\qquad w\in\Pi_\sigma.
\]
The image $\{-w^2:w\in\Pi_\sigma\}$ is nonempty and open, since $w\mapsto-w^2$ is holomorphic with nonzero derivative there. It contains no Dirichlet eigenvalue. For each fixed pair $f,h$, the difference of the boundary pairings is meromorphic in $z$ and vanishes on this open set. The complement in $\opSevenC$ of the two discrete real spectra is connected. Indeed, choose a real point below both spectra. Paths in the upper and lower half-planes can be joined through the vertical line at that point, while a nonspectral real point can first be moved off the real axis. The identity theorem therefore gives equality on that complement, first for scalar pairings and then for the bounded operators by density.

To identify poles, fix a point in either spectrum and choose a small circle enclosing no other spectral point. The operator families agree on the circle, so their contour integrals and residues agree. If only one family had an eigenvalue at its center, its residue would be nonzero by Lemma~\ref{lem:meromorphic}, while the other residue would be zero. Thus the eigenvalue sets coincide. The common residue has rank equal to the eigenvalue multiplicity, as proved in Appendix~\ref{app:resolvent}; consequently the multiplicities coincide. Equality on the punctured neighborhoods also gives equality of the meromorphic families. No finite accumulation point of the sampled energies is used.
\end{proof}

We finally specify precisely the external uniqueness input. For $\dim X\geq2$, inverse boundary spectral theory \cite[Section~4.5]{KKL}, \cite{Kurylev}, and \cite[Theorem~2 and Section~5.3]{KKLM} has the following consequence. If the Dirichlet spectra for $a_1$ and $a_2$ coincide with multiplicities, and each pair of corresponding eigenspaces admits $L^2(X,dV_{g_X})$-orthonormal bases satisfying
\[
 \partial_{\nu_X}\varphi_{1,\lambda,r}|_{\partial X}
 =\partial_{\nu_X}\varphi_{2,\lambda,r}|_{\partial X},
 \qquad 1\leq r\leq m_\lambda,
\]
then there is a smooth boundary-fixing isometry $F$ of $(X,g_X)$ with $a_1=F^*a_2$. The boundary is identified pointwise. Normal signs and any known boundary density must be changed in the same way for both data sets. If a positive Dirichlet realization is used in a formulation of the external theorem, add one common sufficiently large constant to both potentials: all eigenvalues shift equally, whereas eigenfunctions and their normal traces do not change. Subtracting that constant recovers the stated implication for arbitrary smooth real potentials.

For $\dim X=1$, we use the classical uniqueness theorem from the Dirichlet eigenvalues and the norming constants on a fixed interval \cite{PT}. Section~\ref{sec:proof-product} explicitly extracts those constants from $K_\lambda$. In higher dimensions it verifies the orthonormal basis matching above and removes the isometry ambiguity. In particular, the external input is not uniqueness from eigenvalues alone or from arbitrarily scaled boundary traces.

\section{The proof of Theorem~\ref{thm:product}}\label{sec:proof-product}

We first recover the common transverse potential from the boundary values and the mean normalization. Eigenfunctions of this common operator then give the sampled boundary maps on $X$. Lemma~\ref{lem:sample-uniqueness} identifies their meromorphic continuation. Finally, we convert the common residue operators into normalized boundary spectral data and apply uniqueness on the known factor $X$.

\begin{proof}[Proof of Theorem~\ref{thm:product}]
By \eqref{eq:boundary-determination},
\[
 a_1(x)+b_1(y)=a_2(x)+b_2(y),\qquad (x,y)\in\partial X\times Y.
\]
Fix $x_0\in\partial X$ and integrate in $y$. The zero-mean normalization yields
\[
 \opSevenVol_h(Y)(a_1(x_0)-a_2(x_0))=0.
\]
Thus $a_1(x_0)=a_2(x_0)$, and substitution gives $b_1=b_2=:b$ on $Y$. The same boundary identity also yields $a_1=a_2$ on $\partial X$. In particular, the transverse operator $B_b=-\Delta_h+b$ is now common to both equations; it was not assumed to be known in advance.

Let $0<\nu_1<\nu_2<\cdots$ be its distinct positive eigenvalues. Choose one eigenfunction $\phi_k$ for each $k$, normalized by
\[
 B_b\phi_k=\nu_k\phi_k,\qquad
 \int_Y|\phi_k|^2\,dV_h=1.
\]
No eigenvalue is assumed simple. Equation \eqref{eq:mode-identity} implies that $-\nu_k$ lies outside both base Dirichlet spectra and, for smooth $f$ on $\partial X$,
\[
 \begin{split}
 (\Lambda_{X,a_1}(-\nu_k)f)\otimes\phi_k
 &=\Lambda_{\opSevenOm,q_1}(0)(f\otimes\phi_k)\\
 &=\Lambda_{\opSevenOm,q_2}(0)(f\otimes\phi_k)
  =(\Lambda_{X,a_2}(-\nu_k)f)\otimes\phi_k.
 \end{split}
\]
To remove the transverse factor without an ambiguity in distributional traces, test this equality against $h\otimes\phi_k$, with $h\in C^\infty(\partial X)$, using the sesquilinear pairing fixed in Section~\ref{subsec:boundary-family}. The product boundary measure is $dS_{g_X}\,dV_h$, and hence
\[
 \begin{split}
 0&=\bigl([\Lambda_{X,a_1}(-\nu_k)-\Lambda_{X,a_2}(-\nu_k)]f,h\bigr)_{\partial X}
                  \int_Y|\phi_k|^2\,dV_h\\
  &=\bigl([\Lambda_{X,a_1}(-\nu_k)-\Lambda_{X,a_2}(-\nu_k)]f,h\bigr)_{\partial X}.
 \end{split}
\]
Smooth test functions separate boundary distributions. For each fixed $k$, the base boundary operators are bounded in the natural $H^{1/2}\to H^{-1/2}$ topology. Approximating an arbitrary input by smooth inputs therefore gives
\[
 \Lambda_{X,a_1}(-\nu_k)=\Lambda_{X,a_2}(-\nu_k):
 H^{1/2}(\partial X)\longrightarrow H^{-1/2}(\partial X).
\]
These negative-energy maps are obtained from the one fixed-energy product operator, not from additional energy measurements. Lemma~\ref{lem:distinct-eigenvalues} verifies \eqref{eq:divergent-samples}. Lemma~\ref{lem:sample-uniqueness} then gives equality of the meromorphic families, their eigenvalues, and all residue operators $K_\lambda$.

We first settle the one-dimensional base explicitly. If $\dim X=1$, identify the known metric interval with $[0,\ell_X]$ by arclength measured from one fixed boundary endpoint. Boundary integration is then summation over the two endpoints. For $j=1,2$, let $S_j(t,z)$ solve
\[
 -\partial_t^2S_j+a_j(t)S_j=zS_j,\qquad
 S_j(0,z)=0,\qquad \partial_tS_j(0,z)=1.
\]
Every Dirichlet eigenvalue $\lambda_k$ is simple: a solution with both value and derivative zero at the left endpoint vanishes, and the space satisfying the left Dirichlet condition is one dimensional. Set
\[
 \alpha_{j,k}=\int_0^{\ell_X}|S_j(t,\lambda_k)|^2\,dt>0,
 \qquad
 \varphi_{j,k}(t)=\alpha_{j,k}^{-1/2}S_j(t,\lambda_k).
\]
This is an $L^2$-normalized eigenfunction. Since the outward derivative at the left endpoint is $-\partial_t$, the common residue matrix satisfies
\[
 (K_{\lambda_k})_{00}
   =|\partial_{\nu_X}\varphi_{j,k}(0)|^2
   =\alpha_{j,k}^{-1},\qquad j=1,2.
\]
Thus the Dirichlet eigenvalues and the norming constants $\alpha_{j,k}$ agree. The interval inverse spectral theorem \cite{PT} gives $a_1=a_2$ in this fixed arclength coordinate. With $b_1=b_2$, the theorem follows when $\dim X=1$.

Assume henceforth that $\dim X\geq2$. We check that the residue operators determine the normalized boundary spectral data required by the external theorem. For a common eigenvalue $\lambda$, let
\[
 \begin{gathered}
 E_{j,\lambda}=\ker(A_{a_j,D}-\lambda),\\
 T_j:E_{j,\lambda}\longrightarrow L^2(\partial X,dS_{g_X}),
 \qquad T_j\varphi=\partial_{\nu_X}\varphi|_{\partial X}.
 \end{gathered}
\]
Equip $E_{j,\lambda}$ with the $L^2(X,dV_{g_X})$ inner product, and let $T_j^*$ be the adjoint for these interior and boundary inner products. Boundary unique continuation \cite{KKL} makes $T_j$ injective. The residue formula \eqref{eq:residue-operator} defines bounded finite-rank operators on $L^2(\partial X,dS_{g_X})$. Their equality on smooth boundary functions extends by density, giving
\begin{equation}\label{eq:common-residue-factorization}
 K_\lambda=T_1T_1^*=T_2T_2^*.
\end{equation}
For each $j$,
\[
 (\opSevenran(T_jT_j^*))^\perp=\ker(T_jT_j^*)=\ker T_j^*
      =(\opSevenran T_j)^\perp.
\]
Both ranges are finite dimensional and therefore closed, so taking orthogonal complements once more gives $\opSevenran(T_jT_j^*)=\opSevenran T_j$. Since $T_j$ is injective, $\dim E_{j,\lambda}=\dim\opSevenran T_j$. Together with \eqref{eq:common-residue-factorization} this yields
\[
 \dim E_{j,\lambda}=\opSevenrank K_\lambda,\qquad
 \opSevenran T_j=\opSevencR_\lambda:=\opSevenran K_\lambda.
\]
The restriction of $K_\lambda$ to $\opSevencR_\lambda$ is positive definite. Indeed, if $z\in\opSevencR_\lambda$ and $(K_\lambda z,z)_{\partial X}=0$, then $T_j^*z=0$, so $z$ is also orthogonal to $\opSevencR_\lambda$ and must vanish. Define
\[
 W_j=K_\lambda^{-1/2}T_j:E_{j,\lambda}\longrightarrow\opSevencR_\lambda,
\]
where the inverse square root is taken only on $\opSevencR_\lambda$. These maps are onto, and \eqref{eq:common-residue-factorization} gives
\[
 W_jW_j^*=K_\lambda^{-1/2}K_\lambda K_\lambda^{-1/2}
        =I_{\opSevencR_\lambda}.
\]
Because their source and target have the same finite dimension, the maps $W_j$ are unitary. Therefore
\[
 U=W_2^*W_1:E_{1,\lambda}\longrightarrow E_{2,\lambda}
\]
is unitary and
\[
 T_2U=K_\lambda^{1/2}W_2W_2^*W_1
     =K_\lambda^{1/2}W_1=T_1.
\]
An orthonormal basis of $E_{1,\lambda}$ and its image under $U$ have identical normal traces. The same argument on the real eigenspaces gives real orthonormal bases when desired. Repeating this construction for each eigenvalue supplies the full boundary spectral data, including the orthonormal normalization in each multiple eigenspace.

We may now apply the inverse boundary spectral theorem specified in Section~\ref{subsec:boundary-family}. In its geometric formulation, it gives an isometry
$F:(X,g_X)\to(X,g_X)$ fixing $\partial X$ pointwise, with the two potentials related by pullback under $F$. We verify that $F=\opSevenId_X$. At a boundary point $x$, the differential $dF_x$ fixes every tangent vector to $\partial X$. It also fixes the inward unit normal $\nu_{\mathrm{in}}$: an isometry preserves the normal line and an interior-pointing vector cannot be sent outside $X$. Hence, for sufficiently small $t\geq0$,
\[
 F\bigl(\exp_x(t\nu_{\mathrm{in}})\bigr)
 =\exp_{F(x)}(t\,dF_x\nu_{\mathrm{in}})
 =\exp_x(t\nu_{\mathrm{in}}).
\]
Thus $F$ is the identity on an interior collar. We spell out the standard continuation argument. In the connected interior $X^\circ$, let
\[
 \mathcal S=\{p\in X^\circ:F(p)=p\text{ and }dF_p=I\}.
\]
The collar shows that $\mathcal S$ is nonempty. It is closed in $X^\circ$ by continuity of $F$ and $dF$. It is also open: if $p\in\mathcal S$, choose an interior normal neighborhood on which $\exp_p$ is a diffeomorphism. For every sufficiently small $v\in T_pX$, the isometry property gives
\[
 F(\exp_p v)=\exp_{F(p)}(dF_pv)=\exp_p v,
\]
so $F$ is the identity on that neighborhood and its differential is the identity there. Hence $\mathcal S=X^\circ$. Continuity extends the identity to $\partial X$, and therefore $F=\opSevenId_X$ on all of $X$. Consequently the pullback relation from the inverse boundary spectral theorem reduces to $a_1=a_2$ on the fixed metric. Together with $b_1=b_2$ and the interval case already proved, this completes the proof.
\end{proof}

\section{Appendix}

This appendix contains exactly two technical complements used in the main text. Appendix~\ref{app:measurability} completes the measurable construction of the paired finite-phase solutions on rotating cubes in every dimension $n\geq3$. Appendix~\ref{app:resolvent} proves the resolvent representation, residue formula, nonvanishing of the residues, and the uniform polynomial growth estimate on a right half-plane that are summarized in Lemma~\ref{lem:meromorphic}. The complex-analytic continuation from the sampled energies is already proved in Lemma~\ref{lem:sample-uniqueness} and requires no additional appendix argument.

\subsection{A measurable family of paired solutions}\label{app:measurability}

We complete the measurability assertion in Lemma~\ref{lem:paired-solutions}. Strong measurability means almost-everywhere convergence of simple functions with values in the stated Sobolev space. Because the frame and the cube depend on $\xi$, we first identify all equations on one fixed cube.

\begin{proof}[Proof of the measurability assertion in Lemma~\ref{lem:paired-solutions}]

For $\nu\in S^{n-1}$, construct orthonormal vectors $v_1(\nu),\ldots,v_{n-1}(\nu)$ in $\nu^\perp$ by the following deterministic Gram--Schmidt procedure. If $v_1,\ldots,v_{j-1}$ have already been chosen, set
\[
 r_i^{(j)}(\nu)=e_i-(e_i\cdot\nu)\nu
                   -\sum_{k=1}^{j-1}(e_i\cdot v_k)v_k,
 \qquad 1\leq i\leq n.
\]
Choose the smallest index $i_j$ with $r_{i_j}^{(j)}(\nu)\ne0$, and let
\[
 v_j(\nu)=\frac{r_{i_j}^{(j)}(\nu)}{|r_{i_j}^{(j)}(\nu)|},
 \qquad 1\leq j\leq n-1.
\]
Such an index exists, since the orthogonal complement of
$\operatorname{span}\{\nu,v_1,\ldots,v_{j-1}\}$ has positive dimension and the coordinate vectors span $\opSevenR^n$. By induction the residuals are Borel functions. The set on which a given index is the first nonzero residual is Borel, so the normalized vectors are Borel as well. Let
\[
 d(\nu)=\det(v_1,\ldots,v_{n-1},\nu)\in\{-1,1\},
\]
and replace $v_{n-1}$ by $d(\nu)v_{n-1}$. The resulting matrix
\[
 O(\nu)=(v_1,\ldots,v_{n-1},\nu)
\]
is a Borel map into $SO(n)$. Its first two columns provide the vectors $\omega,\omega_2$ used in Lemma~\ref{lem:paired-solutions}. This construction is valid for every $n\geq3$. For $\xi\ne0$, put $O(\xi)=O(\xi/|\xi|)$, and choose a fixed value at $\xi=0$.

Under $x=O(\xi)y$, all cubes become $Q_0=(-L,L)^n$. The coefficients are
\[
 E_\xi(y)=O(\xi)^T E(O(\xi)y)O(\xi),\qquad
 V_{j,\xi}(y)=V_j(O(\xi)y),
\]
and the pulled-back phases are
\[
 \widetilde\zeta_1=\tau e_1+i\beta(\xi)e_2-i|\xi|e_n/2,
 \qquad
 \widetilde\zeta_2=-\tau e_1-i\beta(\xi)e_2-i|\xi|e_n/2.
\]
The radial cutoffs, half-integer lattice, and spaces $Y_\tau(Q_0)$ and $Z_\tau(Q_0)$ are fixed. To verify that the transformed construction is exactly the one used in Lemma~\ref{lem:paired-solutions}, define $U_Ov(y)=v(Oy)$. This is an isometry for the Fourier norms defining $Y_\tau$ and $Z_\tau$. The chain rule gives
\[
 \begin{split}
 U_OP_\zeta U_O^{-1}&=P_{O^T\zeta},\\
 U_O\opSevencE_\zeta U_O^{-1}
   &=-D_{O^T\zeta}\cdot\bigl((O^TE(O\,\cdot)O)D_{O^T\zeta}\bigr),\\
 U_OV_jU_O^{-1}&=V_j(O\,\cdot).
 \end{split}
\]
By the two inverse identities of Lemma~\ref{lem:inverse},
$U_OG_\zeta U_O^{-1}=G_{O^T\zeta}$. Also $U_O\chi=\chi$ because $\chi$ is radial. Applying $U_O$ to every term of the convergent Neumann series therefore yields precisely the transformed series on $Q_0$. No different choice of an auxiliary solution is being made.

Rotations act continuously on $H^s(\opSevenR^n)$, first by change of variables for smooth compactly supported functions and then by density. Consequently $O\mapsto V_j(O\,\cdot)$ is continuous in $H^s(\opSevenR^n)$ for each fixed $V_j\in H^s(\opSevenR^n)$. Since $E\in C_c^s$, its derivatives of order at most $s$ are uniformly continuous on a common compact set. The chain rule gives continuity of
\[
 O\longmapsto O^TE(O\,\cdot)O
 \quad\text{in }W^{s,\infty}(\opSevenR^n).
\]
The image of the compact space $SO(n)$ under this map is compact and hence separable. Composition with the Borel frame therefore gives strong measurability of $E_\xi$ in $W^{s,\infty}$ and of $V_{j,\xi}$ in $H^s$ (the latter target is separable).

Write $G_j(\xi)=G_{\widetilde\zeta_j(\xi)}$. For a fixed finite Fourier sum $F$, the Fourier coefficients of $G_j(\xi)F$ are Borel functions by \eqref{eq:fourier-symbol}. For fixed $F\in Z_\tau(Q_0)$, choose finite sums $F_m\to F$ in $Z_\tau$. The uniform bound
\[
 \opSevennorm{G_j(\xi)(F_m-F)}_{Y_\tau}
 \leq C_G\tau^{-1}\opSevennorm{F_m-F}_{Z_\tau}
\]
shows that $G_j(\xi)F$ is strongly measurable. If $F=F(\xi)$ is strongly measurable in $Z_\tau$, approximate it by simple functions. Applying $G_j(\xi)$ to each simple function and using the same uniform bound gives strong measurability of $G_j(\xi)F(\xi)$.

We spell out the coefficient and phase dependence in the remaining operator. On the fixed cube, for $v\in Y_\tau(Q_0)$, set
\[
 \mathcal Q(E,V,\zeta)v=-D_\zeta\cdot(ED_\zeta v)+Vv.
\]
Let $E,E'$ be coefficient matrices in $W^{s,\infty}(\opSevenR^n)$ and $V,V'$ potentials in $H^s(\opSevenR^n)$, all supported in the fixed interior ball, and let $|\zeta|,|\zeta'|\leq\sqrt2\tau$. Writing $d_\zeta=\zeta-\zeta'$, direct subtraction gives
\[
 \begin{split}
 [\mathcal Q(E,V,\zeta)-\mathcal Q(E',V',\zeta')]v
 ={}&-D_\zeta\cdot\bigl((E-E')D_\zeta v\bigr)\\
    &-D_\zeta\cdot(E'd_\zeta v)
      -d_\zeta\cdot(E'D_{\zeta'}v)\\
    &+(V-V')v.
 \end{split}
\]
The inner-derivative and outer-divergence estimates from Lemma~\ref{lem:perturbation} apply to each term, since they require only the displayed bound on the phase size. Consequently,
\[
 \begin{split}
 &\opSevennorm{[\mathcal Q(E,V,\zeta)-\mathcal Q(E',V',\zeta')]v}_{Z_\tau}\\
 &\quad\leq C\Bigl(\tau^2\opSevennorm{E-E'}_{W^{s,\infty}}
        +\tau\opSevennorm{E'}_{W^{s,\infty}}|d_\zeta|
        +\opSevennorm{V-V'}_{H^s}\Bigr)\opSevennorm v_{Y_\tau}.
 \end{split}
\]
This proves joint continuity for fixed $\tau$ and $v$, in the coefficient norms already used above. The coefficient families take values in separable subspaces, so their strongly measurable simple approximations and this continuity show that
\[
 \xi\longmapsto
 -D_{\widetilde\zeta_j(\xi)}\cdot
       \bigl(E_\xi D_{\widetilde\zeta_j(\xi)}v\bigr)
       +V_{j,\xi}v
\]
is strongly measurable in $Z_\tau$. The preceding argument then gives strong measurability of $K_j(\xi)v$. The uniform bound $\opSevennorm{K_j(\xi)}\leq1/2$, together with simple-function approximation, gives the same conclusion when $v=v(\xi)$ is strongly measurable. Starting from the fixed $\chi$, every term in \eqref{eq:neumann-series} is therefore measurable. The tail after index $J$ satisfies
\[
 \sum_{m=J+1}^\infty2^{-m}\opSevennorm{K_j(\xi)\chi}_{Y_\tau}
 \leq2^{-J}C_K\delta_\tau C_\chi.
\]
This is uniform in $\xi$, and hence the sum is strongly measurable in $Y_\tau(Q_0)$.

Finally, consider the localization and rotation map
\[
 J_Ov=\bigl[x\longmapsto\rho(x)v(O^Tx)\bigr]_0,
\]
where the expression is defined on $OQ_0$ and extended by zero. It is uniformly bounded from $Y_\tau(Q_0)$ to $H^s(\opSevenR^n)$ and continuous in $O$ for each fixed $v$. Indeed, one may first localize by the radial cutoff on $Q_0$, extend by zero, and then use rotation continuity in $H^s$. The bound also yields joint continuity:
\[
 \opSevennorm{J_{O_m}v_m-J_Ov}_{H^s}
 \leq C_\rho\opSevennorm{v_m-v}_{Y_\tau}
      +\opSevennorm{J_{O_m}v-J_Ov}_{H^s}\longrightarrow0
\]
whenever $O_m\to O$ and $v_m\to v$ in $Y_\tau$. Thus the physical-space functions $\widetilde\psi_j(\cdot,\xi)$ are strongly measurable. The continuous multiplication map on $H^s(\opSevenR^n)$ and the fixed multipliers in \eqref{eq:amplitude-definition} give the required strong measurability of $a_\xi$. This completes the assertion deferred from Lemma~\ref{lem:paired-solutions}.
\end{proof}

\subsection{Resolvent formulas and growth in a half-plane}\label{app:resolvent}

We prove Lemma~\ref{lem:meromorphic} on the fixed $(X,g_X)$ for a smooth real $a$. Write $A=A_a$ and $A_D=A_{a,D}$. We first calculate the residues and then estimate the family at $z=-w^2$.

\begin{proof}[Proof of Lemma~\ref{lem:meromorphic}]
Choose a bounded trace extension $\opSevencL:H^{3/2}(\partial X)\to H^2(X)$ with $(\opSevencL f)|_{\partial X}=f$. For $z\notin\opSevenSpecD A$, the solution operator is
\begin{equation}\label{eq:poisson-resolvent}
 P_a(z)f=\opSevencL f-(A_D-z)^{-1}(A-z)\opSevencL f.
\end{equation}
The resolvent term has zero boundary trace, and applying $A-z$ to the right side gives zero. Nonresonance therefore identifies it with the Dirichlet solution. Write $R(z)=(A_D-z)^{-1}$. The spectral theorem gives a meromorphic resolvent on $L^2(X)$ with finite-rank poles of order one. We also need this assertion with the stronger target space $H^2(X)$. The elliptic graph estimate
\[
 \opSevennorm v_{H^2(X)}\leq C_a\bigl(\opSevennorm{A_Dv}_{L^2(X)}+\opSevennorm v_{L^2(X)}\bigr),
 \qquad v\in H^2(X)\cap H^1_0(X),
\]
and the identity
\[
 A_D(A_D-z)^{-1}=I+z(A_D-z)^{-1}
\]
show that $R(z_0):L^2(X)\to H^2(X)\cap H^1_0(X)$ is bounded at every non-spectral $z_0$; see \cite{McLean,KKL}. For $z$ sufficiently close to $z_0$,
\[
 R(z)=R(z_0)\bigl[I-(z-z_0)R(z_0)\bigr]^{-1}.
\]
The inverse on the right is a norm-convergent Neumann series on $L^2(X)$, so this identity proves holomorphy with the $H^2$ target norm, not only with the $L^2$ target norm. At an eigenvalue $\lambda$, restrict $A_D$ to the orthogonal complement of its eigenspace and apply the same argument at $z_0=\lambda$ on that complement. Its spectrum is separated from $\lambda$. Thus, in a neighborhood of $\lambda$,
\[
 R(z)=\frac{\Pi_\lambda}{\lambda-z}+H_\lambda(z),
\]
where $H_\lambda$ is holomorphic as a map $L^2(X)\to H^2(X)\cap H^1_0(X)$. This proves the required meromorphy in the graph norm. Formula \eqref{eq:poisson-resolvent} is consequently meromorphic from $H^{3/2}(\partial X)$ to $H^2(X)$. Taking the bounded normal trace $H^2(X)\to H^{1/2}(\partial X)$ gives the claimed topology for the boundary family.

With the sign convention $A=-\Delta_{g_X}+a$, Green's formula is
\[
 (Au,v)_X-(u,Av)_X
 =\int_{\partial X}
       \bigl(u\overline{\partial_{\nu_X}v}
                     -(\partial_{\nu_X}u)\overline v\bigr)\,dS_{g_X}.
\]
For $u=\opSevencL f$ and a Dirichlet eigenfunction $v=\varphi_{\lambda,r}$,
\[
 ((A-z)\opSevencL f,\varphi_{\lambda,r})_X
 =(\lambda-z)(\opSevencL f,\varphi_{\lambda,r})_X
                 +(f,\psi_{\lambda,r})_{\partial X}.
\]
Near $\lambda$, the resolvent equals $\Pi_\lambda/(\lambda-z)$ plus a holomorphic operator, where $\Pi_\lambda$ is the orthogonal eigenspace projection. Substituting into \eqref{eq:poisson-resolvent} shows that the singular part of $P_a(z)f$ is
\[
 -\frac1{\lambda-z}\sum_{r=1}^{m_\lambda}
                (f,\psi_{\lambda,r})_{\partial X}\varphi_{\lambda,r}
 =\frac1{z-\lambda}\sum_{r=1}^{m_\lambda}
                (f,\psi_{\lambda,r})_{\partial X}\varphi_{\lambda,r}.
\]
Taking the normal derivative proves \eqref{eq:positive-residue}. The sign is positive for the denominator $z-\lambda$, and
\[
 (K_\lambda f,f)_{\partial X}
 =\sum_{r=1}^{m_\lambda}|(f,\psi_{\lambda,r})_{\partial X}|^2\geq0.
\]
We check why no residue can disappear. Suppose $\varphi$ is a Dirichlet eigenfunction and $\partial_{\nu_X}\varphi=0$ on $\partial X$. For $\dim X\geq2$, extend the smooth metric and potential across a boundary coordinate patch and extend $\varphi$ by zero. Both its Dirichlet trace and its conormal trace vanish, so the weak Green formula creates no interface term; the extension solves the same elliptic equation in that patch. It vanishes on the exterior open set. Interior unique continuation, in its standard elliptic form \cite{KKL}, first gives vanishing across this patch and then throughout the connected interior of $X$. For $\dim X=1$, the same conclusion follows from uniqueness for the initial-value problem at an endpoint. Thus in every case the normal trace map is injective on each eigenspace.

If $\sum_r c_r\psi_{\lambda,r}=0$, the corresponding eigenfunction $\sum_r c_r\varphi_{\lambda,r}$ has both traces zero. It vanishes by the preceding argument, and orthonormality gives $c_r=0$ for every $r$. Hence the normal traces are linearly independent. Equivalently, their Gram matrix is positive definite, so the finite-rank operator in \eqref{eq:residue-operator} has rank exactly $m_\lambda>0$. Every Dirichlet eigenvalue is therefore a nonremovable pole of order one, including eigenvalues of higher multiplicity.

For the half-plane bound, choose $C_b\geq0$ with $A_D\geq-C_b$ and choose $\sigma>1$ so that $\sigma^2/2\geq C_b+1$. We claim that, for every real $\lambda\geq-C_b$ and $w=u+it$ with $u>\sigma$,
\begin{equation}\label{eq:resolvent-distance}
 |\lambda+w^2|\geq c(1+|w|).
\end{equation}
If $|t|\leq u/2$, then
\[
 \operatorname{Re}(\lambda+w^2)
 \geq-C_b+3u^2/4\geq1+u^2/4,
 \qquad |w|^2\leq5u^2/4,
\]
which proves the claim in this case. If $|t|>u/2$, then
\[
 |\lambda+w^2|\geq2u|t|,\qquad |w|\leq u+|t|<3|t|,
\]
so $|\lambda+w^2|\geq(2\sigma/3)|w|$. Since $|w|>1$, this also gives \eqref{eq:resolvent-distance}. In particular, $-w^2$ is outside the Dirichlet spectrum throughout $\Pi_\sigma$, and the spectral theorem yields
\[
 \opSevennorm{(A_D+w^2)^{-1}}_{L^2(X)\to L^2(X)}
 \leq\frac C{1+|w|},\qquad w\in\Pi_\sigma.
\]
For $f\in H^{3/2}(\partial X)$, let
\[
 F_w=(A+w^2)\opSevencL f,\qquad
 v_w=(A_D+w^2)^{-1}F_w.
\]
Boundedness of the fixed extension and the resolvent estimate give
\[
 \opSevennorm{F_w}_{L^2(X)}\leq C_a(1+|w|^2)\opSevennorm f_{H^{3/2}(\partial X)},
 \qquad
 \opSevennorm{v_w}_{L^2(X)}\leq C_a(1+|w|)\opSevennorm f_{H^{3/2}(\partial X)}.
\]
Since $A_Dv_w=F_w-w^2v_w$,
\[
 \begin{split}
 \opSevennorm{A_Dv_w}_{L^2(X)}
 &\leq\opSevennorm{F_w}_{L^2(X)}+|w|^2\opSevennorm{v_w}_{L^2(X)}\\
 &\leq C_a(1+|w|)^3\opSevennorm f_{H^{3/2}(\partial X)}.
 \end{split}
\]
The graph estimate therefore implies
\[
 \opSevennorm{v_w}_{H^2(X)}\leq C_a(1+|w|)^3\opSevennorm f_{H^{3/2}(\partial X)}.
\]
By \eqref{eq:poisson-resolvent}, $P_a(-w^2)f=\opSevencL f-v_w$. Applying the normal trace gives
\[
 \opSevennorm{\Lambda_{X,a}(-w^2)f}_{H^{1/2}(\partial X)}
 \leq C_a(1+|w|)^3\opSevennorm f_{H^{3/2}(\partial X)},
\]
which is \eqref{eq:halfplane-growth}. Pairing with a fixed smooth $h$ gives the scalar growth assertion. Holomorphy follows from the resolvent formula. These constants may depend on $X,g_X,a$, but are independent of $w$ on the whole half-plane and have no role in the near-Euclidean constant $c_{\opSevenOm,n}$. This proves every assertion of Lemma~\ref{lem:meromorphic}.
\end{proof}


\begin{thebibliography}{99}

\bibitem{AF}
R.~A. Adams and J.~J.~F. Fournier,
\emph{Sobolev Spaces}, 2nd ed., Pure and Applied Mathematics, vol.~140,
Elsevier/Academic Press, Amsterdam, 2003.

\bibitem{AS}
G.~S. Alberti and M. Santacesaria,
Calder\'on's inverse problem with a finite number of measurements,
\emph{Forum Math. Sigma} \textbf{7} (2019), e35.
\opSevendoi{10.1017/fms.2019.31}.

\bibitem{AAFG}
P. Angulo-Ardoy, D. Faraco, L. Guijarro, and A. Ruiz,
Obstructions to the existence of limiting Carleman weights,
\emph{Anal. PDE} \textbf{9} (2016), no.~3, 575--595.
\opSevendoi{10.2140/apde.2016.9.575}.

\bibitem{Calderon}
A.~P. Calder\'on,
On an inverse boundary value problem,
in \emph{Seminar on Numerical Analysis and its Applications to Continuum Physics},
Soc. Brasileira de Matem\'atica, Rio de Janeiro, 1980, 65--73.

\bibitem{DKN}
T. Daud\'e, N. Kamran, and F. Nicoleau,
Non-uniqueness results for the anisotropic Calder\'on problem with data measured on disjoint sets,
\emph{Ann. Inst. Fourier (Grenoble)} \textbf{69} (2019), no.~1, 119--170.
\opSevendoi{10.5802/aif.3240}.

\bibitem{DKSU}
D. Dos Santos Ferreira, C.~E. Kenig, M. Salo, and G. Uhlmann,
Limiting Carleman weights and anisotropic inverse problems,
\emph{Invent. Math.} \textbf{178} (2009), no.~1, 119--171.
\opSevendoi{10.1007/s00222-009-0196-4}.

\bibitem{DKLS}
D. Dos Santos Ferreira, Y. Kurylev, M. Lassas, and M. Salo,
The Calder\'on problem in transversally anisotropic geometries,
\emph{J. Eur. Math. Soc.} \textbf{18} (2016), no.~11, 2579--2626.
\opSevendoi{10.4171/JEMS/649}.

\bibitem{FKOU}
A. Feizmohammadi, K. Krupchyk, L. Oksanen, and G. Uhlmann,
Reconstruction in the Calder\'on problem on conformally transversally anisotropic manifolds,
\emph{J. Funct. Anal.} \textbf{281} (2021), no.~9, 109191.
\opSevendoi{10.1016/j.jfa.2021.109191}.

\bibitem{FSU}
J. Feldman, M. Salo, and G. Uhlmann,
\emph{The Calder\'on Problem: An Introduction},
Graduate Studies in Mathematics, vol.~253,
American Mathematical Society, Providence, RI, 2025.

\bibitem{Garnett}
J.~B. Garnett,
\emph{Bounded Analytic Functions}, revised 1st ed.,
Graduate Texts in Mathematics, vol.~236,
Springer, New York, 2007.

\bibitem{Hormander}
L. H\"ormander,
The spectral function of an elliptic operator,
\emph{Acta Math.} \textbf{121} (1968), 193--218.

\bibitem{KKL}
A. Katchalov, Y. Kurylev, and M. Lassas,
\emph{Inverse Boundary Spectral Problems},
Monographs and Surveys in Pure and Applied Mathematics, vol.~123,
Chapman \& Hall/CRC, Boca Raton, FL, 2001.

\bibitem{KKLM}
A. Katchalov, Y. Kurylev, M. Lassas, and N. Mandache,
Equivalence of time-domain inverse problems and boundary spectral problems,
\emph{Inverse Problems} \textbf{20} (2004), no.~2, 419--436.
\opSevendoi{10.1088/0266-5611/20/2/007}.

\bibitem{Kurylev}
Y. Kurylev,
An inverse boundary problem for the Schr\"odinger operator with magnetic field,
\emph{J. Math. Phys.} \textbf{36} (1995), no.~6, 2761--2776.

\bibitem{MSS}
S. Ma, S.~K. Sahoo, and M. Salo,
The anisotropic Calder\'on problem at large fixed frequency on manifolds with invertible ray transform,
\emph{J. Lond. Math. Soc. (2)} \textbf{110} (2024), no.~4, e13006.
\opSevendoi{10.1112/jlms.13006}.

\bibitem{McLean}
W. McLean,
\emph{Strongly Elliptic Systems and Boundary Integral Equations},
Cambridge University Press, Cambridge, 2000.

\bibitem{PT}
J. P\"oschel and E. Trubowitz,
\emph{Inverse Spectral Theory}, Pure and Applied Mathematics, vol.~130,
Academic Press, Boston, 1987.

\bibitem{Sogge}
C.~D. Sogge,
\emph{Fourier Integrals in Classical Analysis}, 2nd ed.,
Cambridge Tracts in Mathematics, vol.~210,
Cambridge University Press, Cambridge, 2017.

\bibitem{SU}
J. Sylvester and G. Uhlmann,
A global uniqueness theorem for an inverse boundary value problem,
\emph{Ann. of Math. (2)} \textbf{125} (1987), no.~1, 153--169.
\opSevendoi{10.2307/1971291}.

\bibitem{UW}
G. Uhlmann and Y. Wang,
\emph{The Calder\'on problem for near-Euclidean metrics},
preprint, 2026, \href{https://arxiv.org/abs/2606.26540v1}{arXiv:2606.26540v1}.

\end{thebibliography}
\end{document}